\documentclass[11pt]{article}
%Created on 4/10/07.

%%%extra package
%\usepackage{imsart}

\usepackage{amsmath,amssymb,amsthm,bm}
\usepackage{natbib}
\usepackage{enumitem} %%%enumitem
\usepackage{breakcites}
\usepackage[colorlinks,
linkcolor=blue,
anchorcolor=blue,
citecolor=blue,
breaklinks=true
]{hyperref}

%%%margin and textwidth
\usepackage{geometry}
 \geometry{
 a4paper,
 %total={170mm,257mm},
 left=35mm,
 top=30mm,
 }
\textwidth=5.5in

%%%%more user-defined notation
\def\T{ \mathrm{\scriptscriptstyle T} }
\def\E{\mathbb{E}} % Expectation symbol
\def\P{\mathbb{P}} % Probability symbol

\newtheorem{theorem}{Theorem}
\newtheorem{proposition}{Proposition}
\newtheorem{lemma}{Lemma}
\newtheorem{assumption}{Assumption}

%%%%Main Document
\begin{document}

\title{ \LARGE Adaptive Huber Regression on Markov-dependent Data}

\author{Jianqing Fan,~Yongyi Guo,~and~Bai Jiang\thanks{Department of Operations Research and Financial Engineering, Princeton University, 98 Charlton Street, Princeton, NJ 08540; E-mail: \texttt{\{jqfan,yongyig,baij\}@princeton.edu}. The research is supported by DMS-1662139 and DMS-1712591 and NIH grant 2R01-GM072611-14.}}

%qsun.ustc@gmail.com
%jqfan@princeton.edu

\date{ }

\maketitle

\vspace{-0.25in}

\begin{abstract}
High-dimensional linear regression has been intensively studied in the community of statistics in the last two decades. For the convenience of theoretical analyses, classical methods usually assume independent observations and sub-Gaussian-tailed errors. However, neither of them hold in many real high-dimensional time-series data. Recently [Sun, Zhou, Fan, 2019, J. Amer. Stat. Assoc., in press] proposed Adaptive Huber Regression (AHR) to address the issue of heavy-tailed errors. They discover that the robustification parameter of the Huber loss should adapt to the sample size, the dimensionality, and the moments of the heavy-tailed errors. We progress in a vertical direction and justify AHR on dependent observations. Specifically, we consider an important dependence structure -- Markov dependence. Our results show that the Markov dependence impacts on the adaption of the robustification parameter and the estimation of regression coefficients in the way that the sample size should be discounted by a factor depending on the spectral gap of the underlying Markov chain.
\end{abstract}
\noindent
{\bf Keywords}: Adaptive Huber Regression, dependent observations, Markov chain, high-dimensional regression, heavy-tailed errors.

\section{Introduction}
In the Big Data era, massive and high-dimensional data characterize many modern statistical problems, arising from biomedical sciences, econometrics, finance, engineering and social sciences. Examples include the gene expression data, the functional magnetic resonance imaging (fMRI) data, the macroeconomic data, the high-frequency financial data, the high-resolution image data, the e-commerce data, among others. The data abundance, the high dimensionality and other complex structures have given rise to a few statistical and computational challenges \citep*{fan2014challenges}.

An important and fundamental problem is high-dimensional linear regression, in which the dimensionality (the number of covariates) is much larger than the sample size so that the ordinary least squares estimation of the regression coefficients is highly unstable. To solve this problem, statisticians have made extensive progresses in the development of high-dimensional statistical inference \citep*{tibshirani1996regression, fan2001variable, candes2007dantzig, bickel2009simultaneous, su2016slope}. They commonly make the sparsity assumption that only a small number of covariates contribute to the response variable, and enforce the sparsity of regression coefficients by regularization techniques. For a comprehensive and systematic overview of this high-dimensional regression problem, we refer to \citet{buhlmann2011statistics} and \citet*{tibshirani2015statistical}.

For the convenience of theoretical analyses, many high-dimensional regression methods assume Gaussian or sub-Gaussian-tailed errors in the model. However, this assumption is unsatisfied in a broad range of real datasets. See \citet{cont2001empirical} for asset return data, \citet*{stock2002macroeconomic, mccracken2016fred} for macroeconomic data, \citet{gupta2014transcriptome} for RNA-seq gene expression data, \citet*{wang2015high} for microarray gene expression data, \citet*{eklund2016cluster} for fMRI data, to name a few. \cite*{fan2016shrinkage} argue that heavy-tailed errors are stylized features of high-dimensional data. These heavy-tailed errors impair the consistency of many high-dimensional regression methods, as the ordinary least squares loss function is non-robust to outliers.

Robust loss functions, which are less sensitive to outliers, have been considered to address the issue of heavy-tailed errors. Huber's seminar work \citep{Huber1964robust, Huber1973robust} introduced robust M-estimators (``M'' for ``maximum likelihood-type'') and provided the initial theory on robust regression methods. Asymptotic properties of robust M-estimators in the low-dimensional setting have been well studied by \citet{yohai1979asymptotic, portnoy1985asymptotic, mammen1989asymptotics, he1996general, bai1997general, he2000parameters}. Recently \citet{loh2017statistical} investigated theoretical properties of a general class of regularized robust M-estimators for the high-dimensional linear model with heavy-tailed errors.

More recently, \citet*{sun2019adaptive} studied a specific regularized robust M-estimator, which minimizes the $\ell_1$-regularized Huber loss, for the high-dimensional linear model with heavy-tailed errors, which admit finite $(1+\delta)$-moments for some $\delta > 0$. Remarkably, they observe that the robustification parameter of the Huber loss should adapt to the sample size, the dimensionality and $(1+\delta)$-moments of heavy-tailed errors for an optimal tradeoff between bias and robustness. The adaption of the robustification parameter exhibits a smooth phase transition between regimes of $0 < \delta < 1$ and $\delta > 1$. Thereafter, they name their method \textit{Adaptive Huber Regression} (AHR), to highlight its difference from others' Huber regression methods with fixed robustification parameter.

Apart from heavily-tailed errors, another common feature of high-dimensional data, especially those collected in a temporal order, is dependent observations. For example, fMRI time series data are usually collected from a few different regions over a time period \citep{friston1995analysis, worsley1995analysis}; the monthly data of macroeconomic variables spanning the time period of decades are now benchmark datasets used by many econometric studies \citep*{stock2002macroeconomic, ludvigson2009macro, mccracken2016fred}.

High-dimensional regression has been applied to fMRI time-series data \citep{smith2012future, ryali2012estimation, tang2012measuring} and economic time-series data \citep{ludvigson2009macro,fan2011sparse,belloni2012sparse}, although its theoretical framework takes little consideration of the dependence structure of observations. For fMRI time-series data, the lasso or elastic-net regression method is one of the main tools to find a small number of functionally-connected regions of a specific region in human brains from temporally-ordered observations across all the regions \citep{smith2012future}. On a macroeconomic dataset, \citep{ludvigson2009macro} regressed the U.S. bond premia on macroeconomic variables over the time period spanning from January, 1964 to December, 2003. Nevertheless, the applicability of high-dimensional regression methods to these time-series data is not fully understood, because these methods assume either unconditional independence of data samples directly (as in the random design setup), or conditional independence of data samples given covariates satisfying some conditions (as in the fixed design setup) \citep*{buhlmann2011statistics, tibshirani2015statistical}. In the latter case, high probabilities of these conditions are usually verified with independent observations of covariates.

This paper aims to close the gap between theories of high-dimensional regression methods and practical needs in addressing both heavy-tailed errors and dependent observations of real data. Inspired by the optimality of AHR dealing with heavy-tailed errors in the independent setup, we consider extending AHR to cope with the dependence structure of observations. Albeit of the theoretical results on AHR by \citet*{sun2019adaptive}, it is still unclear whether AHR works for the dependent data. This unclearness puts the applicability of AHR to many real high-dimensional datasets in doubt. Even if AHR is justifiable under some type of dependence structure, we are still curious about the degree to which the data dependence influences the error rate of the AHR estimator. As an initial step towards full answers for these questions, we narrow down to the Markov dependence, an important and widely-used dependence structure, and analyze AHR on Markov-dependent data. Specially, we assume that covariates are functions of an underlying Markov chain and heteroskedastic heavy-tailed errors are dependent on the Markov chain.

In this Markov-dependent setup, we show under moderate conditions that AHR exhibits a similar phase transition of the adaptation of the robustification parameter and the estimation of regression coefficients between regimes of $0 < \delta < 1$ and $\delta > 1$, compared to that in the usual independent setup. The only difference is that the sample size should be discounted by a factor depending on the spectral gap of the underlying Markov chain.

The core of the proof is to bound in $\ell_\infty$-norm the gradient of the Huber loss at the true sparse vector of coefficients, denoted by $\bm{\beta}^\star$, and to establish the restricted eigenvalue condition of the Hessian of the Huber loss over a neighborhood of $\bm{\beta}^\star$. In the usual independent setting, these tasks are accomplished by applying Bernstein's inequality for independent random variables. The Bernstein-type mixture of the sub-Gaussian and sub-exponential tails is the key to derive the trade-off of bias and robustness in AHR. However, the analogous tasks in the Markov-dependent setup are non-trivial, due to the lack of Bernstein's inequalities for (possibly non-identical) functions of Markov chains. A very recent work \citep*{jiang2018bernstein} establishes the exact counterpart of Bernstein's inequality for \textbf{bounded}, non-identical functions of Markov chains. But it still does not fully meet the requirements of the theoretical analyses in this paper, because covariates and errors involved in the theoretical analyses are \textbf{unbounded}. We develop a truncation argument for the extension of the Berstein-type inequality in \citep*{jiang2018bernstein} to unbounded functions.

The rest of this paper is organized as follows. Section \ref{sec2} introduces the high-dimensional linear model and the methodology of AHR in the Markov-dependent setup. Section \ref{sec3} presents the assumptions and the main theorem. Section \ref{sec4} sketches the proof of the main theorem. Other technical proofs are collected in Section \ref{sec5}. Section \ref{sec6} concludes the paper with a brief discussion.

\section{Model and Methodology}\label{sec2}
The goal is to estimate the high-dimensional linear model with Markov-dependent covariates and heavy-tailed errors. Start with the linear model as follows.
$$y_i = \bm{x}_i^\T\bm{\beta} + \varepsilon_i,~~~i=1,\dots,n,$$
where $y_i$ is the response, $\bm{x}_i \in \mathbb{R}^d$ is the vector of $d$ covariates, $\varepsilon_i$ is the error, $\bm{\beta} \in \mathbb{R}^d$ is the vector of $d$ regression coefficients. In the high-dimensional regime, $d$ is much larger than the sample size $n$. To make the model identifiable, assume only a small number $s$ of covariates contribute to the response, i.e., the vector of true regression coefficients $\bm{\beta}^\star$ contains at most $s$ non-zero elements.

Suppose covariates $\{\bm{x}_i\}_{i=1}^n$ are functions of a stationary Markov chain $\{Z_i\}_{i=1}^n$ on a general state space, i.e., for a collection of $d$-dimensional vectorial functions $\{\bm{f}_i\}_{i=1}^n$, where $\bm{f}_i = (f_{i1},\dots,f_{id})^\T$,
$$x_{ij} = f_{ij}(Z_i). $$
The errors are conditionally independent given the underlying Markov chain and possibly heteroskedastic. There exists a conditional distribution $g(\cdot |z)$ such that
$$\varepsilon_i | Z_i \sim g(\cdot|Z_i).$$
If $\{Z_i\}_{i=1}^n$ are independently and identically distributed (i.i.d.) then this regression setup reduces to the usual one in which $\{(\bm{x}_i, \varepsilon_i)\}_{i=1}^n$ are independent.

For the task of estimating the true $s$-sparse coefficients $\bm{\beta}^\star$, consider the $\ell_1$-regularized robust M-estimator with the Huber loss as follows.
\begin{equation} \label{Huber1}
\widehat{\bm{\beta}}_{\tau, \lambda} = \arg \min_{\bm{\beta}} H_\tau(\bm{\beta}) + \lambda\Vert \bm{\beta}\Vert_1, ~~~ H_\tau(\bm{\beta}) = \frac{1}{n}\sum_{i=1}^n h_\tau(y_i - \bm{x}_i^\T\bm{\beta}),
\end{equation}
where $h_\tau$ is the Huber loss \citep{Huber1964robust}
\begin{equation} \label{Huber2}
h_\tau(w) =
\begin{cases}
w^2/2 & \mbox{if } |w| \le \tau\\
\tau |w| - \tau^2/2 & \mbox{if } |w| > \tau,
\end{cases}
\end{equation}
with the so-called robustification parameter $\tau$ \citep*{fan2017estimation}, and $\lambda$ is the regularization parameter encouraging the sparsity of $\bm{\beta}$ \citep{tibshirani1996regression}.

Heuristically, a larger robustification parameter $\tau$ reduces the bias of $\widehat{\bm{\beta}}_{\tau, \lambda}$ at the cost of less robustness. The extreme case of $\tau=\infty$ corresponds to the ordinary least squares estimation. We find that $\tau$ should adapt to the sample size $n$, the dimensionality $d$, the heaviness of the tails of the errors and the dependence of the Markov chain. Suppose the heavy-tailed errors admit finite $(1+\delta)$-moments (conditionally on $Z_i$'s) for some $\delta > 0$. A large $\delta$ indicates light tails of errors. The dependence of the Markov chain is measured by a quantity $\gamma \in [0,1]$, denoting the norm of the Markov operator (induced by transition kernel) acting on the Hilbert space of all squared-integrable and mean-zero functions with respect to the invariant distribution. A small $\gamma$ indicates a fast convergence of the Markov chain towards its stationarity from a non-stationary initial distribution \citep{rudolf2012explicit}. Our analyses show under moderate conditions that the choice of
$$\tau \asymp \left(\frac{1-\gamma}{1+\gamma} \cdot \frac{n}{\log d}\right)^{1/(1+\min\{\delta,1\})}.$$
achieves the optimal trade-off between bias and robustness in the Markov dependent setup.

\section{Assumptions and Theorems} \label{sec3}
This section presents our main result under four assumptions. The first assumption is on the convergence speed of the underlying Markov chain.

\begin{assumption}[Markov chain with non-zero spectral gap] \label{asm1}
The underlying Markov chain $\{Z_i\}_{i=1}^n$ is stationary with its unique invariant measure $\pi$ and admits a non-zero spectral gap $1-\gamma$.
\end{assumption}
Recall that the quantity $\gamma$ is defined as the norm of the Markov operator (induced by transition kernel) acting on the Hilbert space of all $\pi$-squared-integrable and $\pi$-mean-zero functions. $1-\gamma$ is called spectral gap of the Markov chain. It has been involved as constants in mean squared error bound for Markov chain Monte Carlo \citep{rudolf2012explicit}, Hoeffding-type and Bernstein-type inequalities for Markov chains \citep*{lezaud1998chernoff, leon2004optimal, chung2012chernoff, miasojedow2014hoeffding, paulin2015concentration, fan2018hoeffding, jiang2018bernstein}. A non-zero spectral gap is closely related to other convergence criteria of Markov chains \citep*{roberts1997geometric, roberts2001geometric, kontoyiannis2012geometric}.

Next two assumptions allow both covariates and heavy-tailed errors to be heteroskedastic, but impose on them some moment conditions. For the covariates, the bounded fourth moment of some envelop function is required. For the heavy-tailed errors, the bounded (conditional) $(1+\delta)$-moment is required.

\begin{assumption}[Covariates with bounded fourth moments] \label{asm2}
There exists an envelop function $M: z \mapsto \mathbb{R}$ for functions $f_{ij}$'s, i.e., $M(z) \ge \max_{1 \le i \le n, 1 \le j \le d} |f_{ij}(z)|$ for $\pi$-almost every $z$. And, $\sigma^4 := \int M^4(z) \pi(dz) < \infty$.
\end{assumption}

\begin{assumption}[Errors with bounded $(1+\delta)$-moments] \label{asm3}
$\E[\varepsilon_i |Z_i] = 0$ almost surely, i.e., $\int \varepsilon g(\varepsilon|z)d\varepsilon = 0$ for $\pi$-almost every $z$. And, for some $\delta >0$ and $v_\delta > 0$, $\E[|\varepsilon_i|^{1+\delta} |Z_i] < v_\delta$ almost surely,  i.e., $\int |\varepsilon|^{1+\delta} g(\varepsilon|z)d\varepsilon < v_\delta$ for $\pi$-almost every $z$.
\end{assumption}

The last assumption is on the restricted eigenvalue of the (aggregated) covariance matrix of covariates. It is a unified condition in the literature of high-dimensional regression, see e.g., \citet*{bickel2009simultaneous} and \cite{fan2018lamm}. Let $S = \{j: \beta_j^\star \ne 0\}$ be the index set of active covariates. Define the $\ell_1$-cone
\begin{equation} \label{cone}
\mathcal{C} := \{\bm{u} \in \mathbb{R}^d: \Vert \bm{u}_{S^c}\Vert_1 \le 3 \Vert \bm{u}_S \Vert_1 \},
\end{equation}
where $\bm{u}_{S^c}$ is the subvector assembling $\{\bm{u}_j: j \in S^c\}$ and $\bm{u}_S$ is the subvector assembling $\{\bm{u}_j: j \in S\}$. The constant $3$ in the definition of $\mathcal{C}$ has no specific meaning. It can be replaced by other constant larger than $1$. Write the (aggregated) covariance matrix of covariates as
\begin{equation} \label{cov1}
\bm{\Sigma}_n :=  \frac{1}{n} \sum_{i=1}^n \E[\bm{x}_i\bm{x}_i^\T].
\end{equation}
\begin{assumption}[Restricted eigenvalue of covariance matrix] \label{asm4}
There exists constant $\kappa > 0$ such that, for sufficiently large $n$,
$$\inf\{ \bm{u}^\T\bm{\Sigma}_n\bm{u}: \Vert \bm{u} \Vert_2 =1, \bm{u} \in \mathcal{C}\} \ge 2\kappa.$$
\end{assumption}
It is not hard to see that this condition holds if the smallest eigenvalue of $\bm{\Sigma}_n$ is strictly bounded away from 0 for sufficiently large $n$. Furthermore, if vectorial functions $\bm{f}_i = \bm{f}$ do not vary with $i$ then $\{\bm{x}_i\}_{i=1}^n$ is a stationary time series as $\bm{x}_i = \bm{f}(\bm{Z}_i)$ are functions of the underlying Markov chain $\{\bm{Z}_i\}_{i=1}^n$. In this case, $\bm{\Sigma}_n = \bm{\Sigma} = \E[\bm{x}_1\bm{x}_1^\T]$, and Assumption \ref{asm4} holds if the smallest eigenvalue of $\bm{\Sigma}$ is strictly bounded away from 0.

Now we are ready to present the main result of this paper. Note that ``w.h.p." stands for ``with high probability $1-o(1)$".
\begin{theorem}\label{thm}
Suppose Assumptions \ref{asm1}-\ref{asm4} hold and
$$s \sqrt{\frac{1+\gamma}{1-\gamma} \cdot \frac{\log d}{n}} = o(1).$$
Then the AHR estimator $\widehat{\bm{\beta}}_{\tau, \lambda}$ in \eqref{Huber1} with robustification and regularization parameters
\begin{align*}
\tau \asymp \left(\frac{1-\gamma}{1+\gamma} \cdot \frac{n}{\log d}\right)^{1/(1+\min\{\delta,1\})}, ~~~\lambda \asymp \left(\frac{1+\gamma}{1-\gamma} \cdot \frac{\log d}{n}\right)^{\min\{\delta,1\}/(1+\min\{\delta,1\})}
\end{align*}
achieves estimation errors
$$\Vert \widehat{\bm{\beta}}_{\tau, \lambda} - \bm{\beta}^\star\Vert_1 \lesssim s\lambda,~~~\Vert \widehat{\bm{\beta}}_{\tau, \lambda} - \bm{\beta}^\star\Vert_2 \lesssim \sqrt{s} \lambda ~~~w.h.p..$$
\end{theorem}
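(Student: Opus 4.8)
The plan is to run the standard localized analysis of an $\ell_1$-penalized convex $M$-estimator, fed by two probabilistic inputs: an $\ell_\infty$ control of the score $\nabla H_\tau(\bm{\beta}^\star)$, and a restricted strong convexity (restricted eigenvalue) bound for the Hessian $\nabla^2 H_\tau$ on a neighbourhood of $\bm{\beta}^\star$. Write $\widehat{\bm{\Delta}}:=\widehat{\bm{\beta}}_{\tau,\lambda}-\bm{\beta}^\star$. The first-order condition for \eqref{Huber1} is $\nabla H_\tau(\widehat{\bm{\beta}}_{\tau,\lambda})+\lambda\widehat{\bm{z}}=\bm{0}$ for some $\widehat{\bm{z}}\in\partial\Vert\widehat{\bm{\beta}}_{\tau,\lambda}\Vert_1$; on the event $\mathcal{E}_1:=\{\Vert\nabla H_\tau(\bm{\beta}^\star)\Vert_\infty\le\lambda/2\}$ the usual convexity manipulation forces $\widehat{\bm{\Delta}}\in\mc{C}$ (the cone of \eqref{cone}). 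It thus suffices to show (i) $\mathcal{E}_1$ holds w.h.p.\ for the stated $\lambda$, and (ii) there is an event $\mathcal{E}_2$, also w.h.p., on which $\bm{u}^\T\nabla^2 H_\tau(\bm{\beta})\bm{u}\ge\kappa\Vert\bm{u}\Vert_2^2$ for all $\bm{u}\in\mc{C}$ and all $\bm{\beta}$ in the local $\ell_1$-ball around $\bm{\beta}^\star$ of radius $\asymp s\lambda$. Granting (i)--(ii), convexity of $H_\tau$ along the segment $[\bm{\beta}^\star,\widehat{\bm{\beta}}_{\tau,\lambda}]$, the cone inclusion, and $\Vert\widehat{\bm{\Delta}}_S\Vert_1\le\sqrt{s}\,\Vert\widehat{\bm{\Delta}}\Vert_2$ yield $\Vert\widehat{\bm{\Delta}}\Vert_2\lesssim\sqrt{s}\,\lambda/\kappa$ and $\Vert\widehat{\bm{\Delta}}\Vert_1\lesssim s\lambda/\kappa$. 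As is standard, one first confines $\widehat{\bm{\beta}}_{\tau,\lambda}$ to the local ball on which (ii) is asserted, using the convexity argument that a penalized convex objective cannot be minimized strictly outside a ball on whose boundary the descent inequality already certifies improvement.

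\emph{Score bound.} The $j$-th coordinate of $-\nabla H_\tau(\bm{\beta}^\star)$ is $\tfrac1n\sum_{i=1}^n\psi_\tau(\varepsilon_i)x_{ij}$, where $\psi_\tau:=h_\tau'$ is the truncation $\psi_\tau(w)=\min(|w|,\tau)\sgn(w)$. Two elementary pointwise inequalities do the work: $|\psi_\tau(w)-w|\le|w|\,\mathbf{1}\{|w|>\tau\}$ and $\psi_\tau(w)^2\le\tau^{\,1-\min\{\delta,1\}}|w|^{1+\min\{\delta,1\}}$ (the latter reducing to $\psi_\tau(w)^2\le w^2$ when $\delta\ge1$). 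Conditioning on $Z_i$ and using $\E[\varepsilon_i\mid Z_i]=0$ with Assumption~\ref{asm3}, the first inequality gives $|\E[\psi_\tau(\varepsilon_i)x_{ij}\mid Z_i]|=|\E[(\psi_\tau(\varepsilon_i)-\varepsilon_i)x_{ij}\mid Z_i]|\le v_\delta\tau^{-\delta}M(Z_i)$, so (taking expectations and $\E M\le\sigma$) the deterministic bias of each score coordinate is $O(\tau^{-\delta})$; the second inequality with Assumption~\ref{asm2} bounds the per-summand variance by $O(\tau^{\,1-\min\{\delta,1\}}\sigma^2 v_\delta)$. For the centred fluctuation I would treat $\{(Z_i,\varepsilon_i)\}$ as a Markov chain --- conditional independence of the errors makes it Markov with spectral gap at least $1-\gamma$ --- and apply the Bernstein-type inequality of \citep*{jiang2018bernstein}. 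Since $\psi_\tau(\varepsilon_i)x_{ij}$ is \emph{unbounded} (Assumption~\ref{asm2} controls only a fourth moment of the envelope), I split $x_{ij}$ at a truncation level into a bounded part fed to the Markov--Bernstein bound and a tail part controlled in mean and in probability through $\E M^4=\sigma^4$; this truncation is precisely the extension of \citep*{jiang2018bernstein} to unbounded functions alluded to in the introduction. The resulting deviation bound exhibits the signature subGaussian-plus-subexponential mixture with the effective sample size $\tfrac{1-\gamma}{1+\gamma}n$, and a union bound over $j=1,\dots,d$ gives $\Vert\nabla H_\tau(\bm{\beta}^\star)\Vert_\infty\lesssim\sqrt{\tau^{\,1-\min\{\delta,1\}}\,\tfrac{1+\gamma}{1-\gamma}\tfrac{\log d}{n}}+\tau^{-\delta}$ up to the negligible truncation correction. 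Balancing the bias $\tau^{-\delta}$ against the fluctuation then produces the stated $\tau$ and $\lambda$, the cap at $\delta=1$ reflecting that for $\delta\ge1$ the errors have finite variance so the fluctuation ceases to improve with $\tau$ --- the same phase transition as in \citep*{sun2018adaptive}, now in discounted sample size.

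\emph{Restricted eigenvalue of the Hessian.} Here $\bm{u}^\T\nabla^2 H_\tau(\bm{\beta})\bm{u}=\tfrac1n\sum_i\mathbf{1}\{|y_i-\bm{x}_i^\T\bm{\beta}|\le\tau\}(\bm{x}_i^\T\bm{u})^2$. On the local ball, $|y_i-\bm{x}_i^\T\bm{\beta}|\le|\varepsilon_i|+|\bm{x}_i^\T(\bm{\beta}-\bm{\beta}^\star)|$, so I would lower-bound the indicator by $\mathbf{1}\{|\varepsilon_i|\le\tau/2\}$ times a $\bm{\beta}$-free bound obtained by truncating the quadratic form $(\bm{x}_i^\T\bm{u})^2$ at a level of order $s$ --- the mass removed being $O(s^2\sigma^4/\text{level})$ because $\E(\bm{x}_i^\T\bm{u})^4\lesssim\Vert\bm{u}\Vert_1^4\sigma^4$ and $\Vert\bm{u}\Vert_1\lesssim\sqrt{s}$ on $\mc{C}\cap\{\Vert\bm{u}\Vert_2=1\}$ --- as in \citep*{sun2018adaptive}. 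This reduces matters to a uniform-over-$\mc{C}\cap\{\Vert\bm{u}\Vert_2=1\}$ lower bound on $\tfrac1n\sum_i(\bm{x}_i^\T\bm{u})^2\mathbf{1}\{\cdots\}$, whose expectation is $\ge\bm{u}^\T\bm{\Sigma}_n\bm{u}-o(1)\ge2\kappa-o(1)$ by Assumption~\ref{asm4}. For the concentration I would, for each fixed $\bm{u}$, view the bounded truncated summands as functions of $\{(Z_i,\varepsilon_i)\}$ and invoke Markov--Bernstein, then pass to the supremum by a covering argument: $\mc{C}\cap\{\Vert\bm{u}\Vert_2=1\}$ has metric entropy of order $s\log d$. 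The fluctuation is then of order $\sqrt{\tfrac{1+\gamma}{1-\gamma}\tfrac{s\log d}{n}}$ plus lower-order range terms, which is $o(1)$ under the hypothesis $s\sqrt{\tfrac{1+\gamma}{1-\gamma}\tfrac{\log d}{n}}=o(1)$; hence $\bm{u}^\T\nabla^2 H_\tau(\bm{\beta})\bm{u}\ge\kappa$ on $\mathcal{E}_2$.

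\emph{Main obstacle.} The deterministic optimization calculus is classical; the real difficulty is supplying the two concentration statements in the Markov-dependent, heavy-tailed, \emph{unbounded} regime. The Bernstein inequality of \citep*{jiang2018bernstein} covers only bounded (possibly non-identical) functions of a Markov chain, so everything hinges on a truncation device that simultaneously (a) keeps the bounded part's variance proxy at the order $\tau^{\,1-\min\{\delta,1\}}\sigma^2$ dictated by the pointwise bound above, (b) keeps its range small enough that the subexponential term in Bernstein does not degrade the rate, and (c) makes the truncation remainder negligible using only the available moments --- $\E M^4<\infty$ and the conditional $(1+\delta)$-moments of the errors --- with truncation levels mutually compatible across the score bound and the Hessian bound. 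Threading these constraints so that the sample size is discounted by exactly $\tfrac{1-\gamma}{1+\gamma}$ and the $\delta=1$ phase transition survives is the delicate part; the rest is bookkeeping.
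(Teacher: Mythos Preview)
Your overall architecture---cone inclusion under $\Vert\nabla H_\tau(\bm\beta^\star)\Vert_\infty\le\lambda/2$, a localized restricted-eigenvalue bound for $\nabla^2 H_\tau$, then the standard $\ell_1/\ell_2$ error bounds---is exactly the paper's Proposition~\ref{prop1}, and your score bound (bias $O(\tau^{-\delta})$ from $|\psi_\tau(w)-w|\le|w|\mathbf{1}\{|w|>\tau\}$, variance proxy $O(\tau^{1-\min\{\delta,1\}})$, truncation of the unbounded summands to feed Markov--Bernstein, union over $j$) coincides with Proposition~\ref{prop3}. The genuine divergence is in the LRE step. The paper does \emph{not} cover $\mc{C}\cap\{\Vert\bm u\Vert_2=1\}$; instead it reduces to \emph{entrywise} concentration via H\"older, $|\bm u^\T A\bm u|\le\Vert\bm u\Vert_1^2\max_{j,k}|A_{jk}|\le16s\max_{j,k}|A_{jk}|$ on $\mc{C}$, so uniformity in $\bm u$ is free and only a truncated Markov--Bernstein bound plus a union over $d^2$ index pairs is needed (Lemmas~\ref{lem1}--\ref{lem2}). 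This yields the loss $16s\,O_p\bigl(\sqrt{\tfrac{1+\gamma}{1-\gamma}\tfrac{\log d}{n}}\bigr)$, which is precisely the origin of the hypothesis $s\sqrt{\tfrac{1+\gamma}{1-\gamma}\tfrac{\log d}{n}}=o(1)$. The $\bm\beta$-dependence of the Hessian indicator is removed not by truncating $(\bm x_i^\T\bm u)^2$ but via $|\bm x_i^\T(\bm\beta-\bm\beta^\star)|\le M(Z_i)\Vert\bm\beta-\bm\beta^\star\Vert_1\le M(Z_i)r$, reducing to the $\bm\beta$-free event $\{M(Z_i)>\tau/2r\}$ whose contribution is controlled by $\E M^4=\sigma^4$. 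Your covering route could in principle deliver a sharper $\sqrt{s\log d/n}$ fluctuation, but executing it under Markov dependence with only fourth-moment design assumptions requires controlling the $\bm u$-Lipschitz behaviour of unbounded truncated quadratics over a set of metric entropy $\asymp s\log d$---considerably more delicate than your sketch indicates---whereas the paper's entrywise device is elementary once the truncated Markov--Bernstein inequality is in hand.
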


Similar to the discovery of \citet*{sun2019adaptive} in the independent setup, there is a smooth phase transition of $\tau$-adaption and $\bm{\beta}^\star$-estimation. A small $0 < \delta < 1$ suffices for AHR to consistently estimate $\bm{\beta}^\star$, albeit the rates $s(\log d/n)^{\delta/(1+\delta)}$ or $s^{1/2}(\log d/n)^{\delta/(1+\delta)}$ (given fixed $\gamma < 1$) are slower than those for the case of $\delta = 1$. In the latter case of $\delta = 1$, AHR achieves rates $s(\log d/n)^{1/2}$ for $\ell_1$-error and $s^{1/2}(\log d/n)^{1/2}$ for $\ell_2$-error like classical high-dimensional regression methods; but AHR only requires bounded second moments of error $\varepsilon_i$, which is weaker than the sub-Gaussian error condition in classical high-dimensional regression methods.  A larger $\delta > 1$ gains no more estimation accuracy than $\delta = 1$.
%although the choices of $\tau$ is less sensitive.  For example, if $\delta = 2$, we have more flexible choice of $\tau$ than that for $\delta = 1.5$, say.  The cases of sub-Gaussian errors belong to the regime of $\delta > 1$ for any $\delta$, which has much wider choice of $\tau$.

The Markov dependence impacts on the adaption of the robustification parameter and the estimation of regression coefficients in the way that the sample size $n$ is discounted by a factor $(1-\gamma)/(1+\gamma) < 1$. In other words, to achieve comparable $\tau$-adaption and $\bm{\beta}^\star$-estimation, the required sample size increases by $(1+\gamma)/(1-\gamma)$ when moving from the independent setup to the Markov-dependent setup. Furthermore, this theorem allows $\gamma$ to approach to 1 as $n$ increases, so long as $s \sqrt{(1+\gamma)/(1-\gamma) \cdot \log d/n} \to 0$. Even though the spectral gap of the Markov chain is difficult to accurately compute in practice \citep*{hsu2015mixing}, Theorem \ref{thm} also apply if one replaces $\gamma$ with an inaccurate overestimate $\gamma' \ge \gamma$.

\section{Proof of Theorem \ref{thm}} \label{sec4}

We break the proof of Theorem \ref{thm} into three propositions. Proposition \ref{prop1} bounds the $\ell_1$- and $\ell_2$-errors of a generic $\ell_1$-regularized M-estimator $\widehat{\bm{\beta}}$ minimizing $\mathcal{L}(\bm{\beta}) + \lambda \Vert \bm{\beta} \Vert_1$. Roughly speaking, given a \textit{localized restricted eigenvalue} (LRE) condition, it establishes an $\ell_2$-error bound
$$\Vert \widehat{\bm{\beta}}_{\tau, \lambda} - \bm{\beta}^\star \Vert_2 \lesssim \sqrt{s}\Vert \nabla H_\tau(\bm{\beta}^\star)\Vert_\infty,$$
and a similar $\ell_1$-error bound with an additional factor $\sqrt{s}$, by applying Proposition 1 to $(\mathcal{L}, \widehat{\bm{\beta}})=(H_\tau, \widehat{\bm{\beta}}_{\tau, \lambda})$. This LRE condition requires strictly positive restricted eigenvalues over a local $\ell_1$-neighborhood. It is a simplified version of \citep*[Definition 2]{sun2019adaptive}. Another LRE condition in $\ell_2$-neighborhood has found applications in \citep*[Definition 4.1]{fan2018lamm}.

\begin{proposition} \label{prop1}
Consider a $\ell_1$-regularized minimizer $\widehat{\bm{\beta}} = \arg \min \mathcal{L}(\bm{\beta}) + \lambda \Vert \bm{\beta}\Vert_1$ of a convex, twice differentiable function $\mathcal{L}: \mathbb{R}^d \mapsto \mathbb{R}$. Suppose the following two conditions hold:
\begin{enumerate}[label=(\roman*)]
\item (localized restricted eigenvalue) Recall that $S$ is the support of $\bm{\beta}^\star$ and that $\mathcal{C}$ is defined as \eqref{cone}. There exists a constant $\kappa > 0$ such that
$$\inf\{ \bm{u}^\T \nabla^2 \mathcal{L}(\bm{\beta}) \bm{u}: ~\bm{u} \in \mathcal{C}, ~\Vert \bm{u}\Vert_2=1, ~\Vert \bm{\beta} - \bm{\beta}^\star \Vert_1 \le 48s\lambda/\kappa\} \ge \kappa.$$
\item A $s$-sparse $\bm{\beta}^\star$ satisfies $\Vert \nabla \mathcal{L}(\bm{\beta}^\star)\Vert_\infty \le \lambda/2$.
\end{enumerate}
Then
$$\Vert \widehat{\bm{\beta}} - \bm{\beta}^\star \Vert_1 \le 48s\lambda/\kappa,~~~\Vert \widehat{\bm{\beta}} - \bm{\beta}^\star\Vert_2 \le 12\sqrt{s}\lambda/\kappa.$$
\end{proposition}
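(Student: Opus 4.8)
The statement is a purely deterministic fact about $\ell_1$-penalized convex $M$-estimation, and the plan is to run the classical ``basic inequality $\Rightarrow$ cone membership $\Rightarrow$ restricted strong convexity'' argument, with a single extra localization step to cope with the fact that hypothesis (b) only holds on the $\ell_1$-ball $B := \{\bm{\beta} : \Vert \bm{\beta} - \bm{\beta}^\star\Vert_1 \le 48s\lambda/\kappa\}$ rather than globally. Write $\widehat{\bm{\delta}} := \widehat{\bm{\beta}} - \bm{\beta}^\star$ and $F(\bm{\beta}) := \mc{L}(\bm{\beta}) + \lambda\Vert\bm{\beta}\Vert_1$. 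First I would record the basic inequality $\mc{L}(\widehat{\bm{\beta}}) - \mc{L}(\bm{\beta}^\star) \le \lambda(\Vert\bm{\beta}^\star\Vert_1 - \Vert\widehat{\bm{\beta}}\Vert_1)$ coming from $F(\widehat{\bm{\beta}}) \le F(\bm{\beta}^\star)$, bound the left side from below by convexity and (a) via $\mc{L}(\widehat{\bm{\beta}}) - \mc{L}(\bm{\beta}^\star) \ge \nabla\mc{L}(\bm{\beta}^\star)^\T\widehat{\bm{\delta}} \ge -\tfrac{\lambda}{2}\Vert\widehat{\bm{\delta}}\Vert_1$, and bound the right side using $\bm{\beta}^\star_{S^c}=0$ and the triangle inequality by $\lambda(\Vert\widehat{\bm{\delta}}_S\Vert_1 - \Vert\widehat{\bm{\delta}}_{S^c}\Vert_1)$. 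Comparing the two forces $\Vert\widehat{\bm{\delta}}_{S^c}\Vert_1 \le 3\Vert\widehat{\bm{\delta}}_S\Vert_1$, i.e. $\widehat{\bm{\delta}}\in\mc{C}$, and hence $\Vert\widehat{\bm{\delta}}\Vert_1 \le 4\Vert\widehat{\bm{\delta}}_S\Vert_1 \le 4\sqrt{s}\,\Vert\widehat{\bm{\delta}}\Vert_2$ by Cauchy--Schwarz on the $s$-sparse block.

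The localization step is needed because so far nothing guarantees $\widehat{\bm{\beta}}\in B$, so (b) cannot yet be invoked along $[\bm{\beta}^\star,\widehat{\bm{\beta}}]$. I would argue by contradiction: if $\Vert\widehat{\bm{\delta}}\Vert_1 > 48s\lambda/\kappa$, put $\eta := (48s\lambda/\kappa)/\Vert\widehat{\bm{\delta}}\Vert_1 \in (0,1)$ and $\widetilde{\bm{\beta}} := \bm{\beta}^\star + \eta\widehat{\bm{\delta}} = (1-\eta)\bm{\beta}^\star + \eta\widehat{\bm{\beta}}$; convexity of $F$ together with $F(\widehat{\bm{\beta}}) \le F(\bm{\beta}^\star)$ gives $F(\widetilde{\bm{\beta}}) \le F(\bm{\beta}^\star)$, so $\widetilde{\bm{\beta}}$ satisfies the same basic inequality, and rerunning the previous paragraph with $\widetilde{\bm{\delta}} := \eta\widehat{\bm{\delta}}$ gives $\widetilde{\bm{\delta}}\in\mc{C}$ and $\Vert\widetilde{\bm{\delta}}\Vert_1 \le 4\sqrt{s}\,\Vert\widetilde{\bm{\delta}}\Vert_2$. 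Now $\Vert\widetilde{\bm{\delta}}\Vert_1 = 48s\lambda/\kappa$, so the whole segment $[\bm{\beta}^\star,\widetilde{\bm{\beta}}]$ lies in $B$; a second-order Taylor expansion gives $\mc{L}(\widetilde{\bm{\beta}}) - \mc{L}(\bm{\beta}^\star) - \nabla\mc{L}(\bm{\beta}^\star)^\T\widetilde{\bm{\delta}} = \tfrac12\widetilde{\bm{\delta}}^\T\nabla^2\mc{L}(\bm{\xi})\widetilde{\bm{\delta}}$ for some $\bm{\xi}\in B$, and since the cone is scale-invariant, (b) yields $\widetilde{\bm{\delta}}^\T\nabla^2\mc{L}(\bm{\xi})\widetilde{\bm{\delta}} \ge \kappa\Vert\widetilde{\bm{\delta}}\Vert_2^2$. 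Feeding this and (a) into the basic inequality and using $\Vert\bm{\beta}^\star\Vert_1 - \Vert\widetilde{\bm{\beta}}\Vert_1 \le \Vert\widetilde{\bm{\delta}}_S\Vert_1 - \Vert\widetilde{\bm{\delta}}_{S^c}\Vert_1$ gives, after collecting terms, $\tfrac{\kappa}{2}\Vert\widetilde{\bm{\delta}}\Vert_2^2 \le \tfrac{3\lambda}{2}\Vert\widetilde{\bm{\delta}}_S\Vert_1 \le \tfrac{3\lambda}{2}\sqrt{s}\,\Vert\widetilde{\bm{\delta}}\Vert_2$, hence $\Vert\widetilde{\bm{\delta}}\Vert_2 \le 3\sqrt{s}\lambda/\kappa$ and $\Vert\widetilde{\bm{\delta}}\Vert_1 \le 12s\lambda/\kappa < 48s\lambda/\kappa$, contradicting $\Vert\widetilde{\bm{\delta}}\Vert_1 = 48s\lambda/\kappa$. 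Therefore $\Vert\widehat{\bm{\beta}} - \bm{\beta}^\star\Vert_1 \le 48s\lambda/\kappa$, i.e. $\widehat{\bm{\beta}}\in B$; the segment $[\bm{\beta}^\star,\widehat{\bm{\beta}}]$ now lies in $B$, so repeating the Taylor/RSC computation verbatim with $\widehat{\bm{\delta}}$ in place of $\widetilde{\bm{\delta}}$ gives $\Vert\widehat{\bm{\beta}} - \bm{\beta}^\star\Vert_2 \le 3\sqrt{s}\lambda/\kappa \le 12\sqrt{s}\lambda/\kappa$.

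\textbf{Main obstacle.} The only step that is not pure bookkeeping is the localization: because (b) is a \emph{local} restricted-eigenvalue condition, one must first certify $\widehat{\bm{\beta}}\in B$ before restricted strong convexity is available along $[\bm{\beta}^\star,\widehat{\bm{\beta}}]$. The convexity-of-$F$ shrinkage trick --- choosing $\eta$ so that $\widetilde{\bm{\beta}}$ lands exactly on the sphere $\Vert\bm{\beta}-\bm{\beta}^\star\Vert_1 = 48s\lambda/\kappa$ while still obeying the basic inequality, and deriving a strictly smaller $\ell_1$-norm from the RSC estimate there --- is what turns the local hypothesis into the global $\ell_1$-bound. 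Everything else (the cone constant $3$, the factor $4 = 1+3$, the Cauchy--Schwarz passage from $\Vert\cdot_S\Vert_1$ to $\sqrt{s}\,\Vert\cdot\Vert_2$, and the arithmetic around $48 > 12$) is routine.
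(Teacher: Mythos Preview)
Your proof is correct, and the cone-membership step (a) is identical to the paper's. The localization step differs in mechanism but achieves the same goal.

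The paper handles the localization not by introducing an intermediate point $\widetilde{\bm{\beta}}$, but by working with the \emph{integral} Taylor remainder $\int_0^1 (1-t)\,\widehat{\bm{\delta}}^\T\nabla^2\mc{L}(\bm{\beta}^\star + t\widehat{\bm{\delta}})\,\widehat{\bm{\delta}}\,dt$ and truncating the integration range to $[0, r/(4\sqrt{s}\Vert\widehat{\bm{\delta}}\Vert_2)]$, on which $\bm{\beta}^\star + t\widehat{\bm{\delta}}\in B$; convexity (nonnegativity of the integrand) lets one discard the tail of the integral. This keeps all computations attached to $\widehat{\bm{\beta}}$ itself and yields the contradiction $4\sqrt{s}\Vert\widehat{\bm{\delta}}\Vert_2 \le r$ in one shot, directly giving the stated $\ell_2$-bound $12\sqrt{s}\lambda/\kappa$. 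Your route instead uses convexity of the \emph{full objective} $F$ to transport the basic inequality to the boundary point $\widetilde{\bm{\beta}}$, then applies the mean-value form of Taylor on the entirely-in-$B$ segment $[\bm{\beta}^\star,\widetilde{\bm{\beta}}]$. This is arguably cleaner bookkeeping (no integral truncation arithmetic) and in fact delivers a sharper constant $3\sqrt{s}\lambda/\kappa$ for the $\ell_2$-error, which you then relax to the stated $12\sqrt{s}\lambda/\kappa$. Both devices are standard ways of coping with a local RSC hypothesis; the paper's truncated-integral version generalizes more directly when the Hessian is only defined almost everywhere, while yours is slightly more transparent here since $\mc{L}$ is assumed twice differentiable.
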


To facilitate the proof of Theorem \ref{thm} as an application of Proposition \ref{prop1} to $(\mathcal{L}, \widehat{\bm{\beta}})=(H_\tau, \widehat{\bm{\beta}}_{\tau, \lambda})$, we establish the LRE condition for the Hessian matrix of the Huber loss $\nabla^2 H_\tau(\bm{\beta})$ in Proposition \ref{prop2}  and bound its gradient $\nabla H_\tau(\bm{\beta}^\star)$ in Proposition \ref{prop3}.

\begin{proposition} \label{prop2}
Under Assumptions \ref{asm1}-\ref{asm4}, we have
\begin{align*}
&~~~ \inf\{ \bm{u}^\T\nabla^2 H_\tau(\bm{\beta}) \bm{u}: \Vert \bm{u} \Vert_2 = 1, \bm{u} \in \mathcal{C}, \Vert \bm{\beta} - \bm{\beta}^\star \Vert_1 \le r\}\\
&= 2\kappa - sO_p\left(\sqrt{\frac{1+\gamma}{1-\gamma} \cdot \frac{\log d}{n}} + \frac{1}{\tau^{1+\delta}} + \frac{r^2}{\tau^2}\right).
\end{align*}
\end{proposition}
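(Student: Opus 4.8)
The plan is to lower bound the quadratic form $\bm{u}^\T \nabla^2 H_\tau(\bm{\beta})\bm{u}$ uniformly over $\bm{u}$ in the intersection of the unit sphere and the cone $\mc{C}$, and over $\bm{\beta}$ in the $\ell_1$-ball of radius $r$ around $\bm{\beta}^\star$. First I would write out the Hessian explicitly: since $h_\tau$ is differentiable with $h_\tau''(w) = \mathbf{1}\{|w| \le \tau\}$ (defined a.e.), we have $\nabla^2 H_\tau(\bm{\beta}) = \frac{1}{n}\sum_{i=1}^n \mathbf{1}\{|y_i - \bm{x}_i^\T\bm{\beta}| \le \tau\}\, \bm{x}_i\bm{x}_i^\T$. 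The strategy is a three-term decomposition of $\bm{u}^\T\nabla^2 H_\tau(\bm{\beta})\bm{u}$ relative to the target $2\kappa \le \bm{u}^\T\bm{\Sigma}_n\bm{u}$ (Assumption~\ref{asm4}): (i) replace the truncated second moment by the full one, paying a bias term controlled by the tails of $\varepsilon_i$ (the $\tau^{-(1+\delta)}$ term via Assumption~\ref{asm3} and Markov's inequality); (ii) replace $\bm{\beta}$ by $\bm{\beta}^\star$ inside the indicator, paying a perturbation term controlled by $\Vert \bm{x}_i^\T(\bm{\beta}-\bm{\beta}^\star)\Vert$ over the $\ell_1$-ball — since $|\mathbf{1}\{|a|\le\tau\} - \mathbf{1}\{|a+b|\le\tau\}| \le \mathbf{1}\{|a|\in[\tau-|b|,\tau+|b|]\}$ or more crudely bounded by a smoothing argument, this yields the $r^2/\tau^2$ term after using $\Vert \bm{u}_S\Vert_1 \le \sqrt{s}$ on the cone and boundedness of the fourth moment of the envelope $M$; and (iii) control the stochastic fluctuation of $\frac{1}{n}\sum_i \mathbf{1}\{|\varepsilon_i|\le\tau\}(\bm{u}^\T\bm{x}_i)^2$ around its mean, uniformly over $\bm{u} \in \mc{C} \cap \mathbb{S}^{d-1}$, which produces the $\sqrt{(1+\gamma)/(1-\gamma)\cdot \log d/n}$ term.

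For step (iii), which is the main obstacle, the reduction to coordinatewise control is standard on the cone: for $\bm{u}\in\mc{C}$ with $\Vert\bm{u}\Vert_2 = 1$, we have $\Vert\bm{u}\Vert_1 \le 4\Vert\bm{u}_S\Vert_1 \le 4\sqrt{s}$, so $|\bm{u}^\T(\bm{A}-\E\bm{A})\bm{u}| \le \Vert\bm{u}\Vert_1^2 \Vert\bm{A}-\E\bm{A}\Vert_{\max} \le 16 s \Vert\bm{A}-\E\bm{A}\Vert_{\max}$ where $\bm{A} = \frac{1}{n}\sum_i \mathbf{1}\{|\varepsilon_i|\le\tau\}\bm{x}_i\bm{x}_i^\T$ and $\Vert\cdot\Vert_{\max}$ is the entrywise sup-norm; this explains the overall factor of $s$ in front. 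It then remains to bound, for each pair $(j,k)$, the deviation of $\frac{1}{n}\sum_i \mathbf{1}\{|\varepsilon_i|\le\tau\} f_{ij}(Z_i)f_{ik}(Z_i)$ from its mean, and to take a union bound over the $d^2$ pairs. Here the summands are \emph{unbounded} functions of the Markov chain $\{Z_i\}$ (and the conditionally independent $\varepsilon_i$, which can be absorbed by conditioning or by enlarging the chain), so I cannot directly invoke the bounded-function Bernstein inequality of \citep{jiang2018bernstein}. The plan is to use the truncation argument alluded to in the introduction: split $f_{ij}f_{ik} = (f_{ij}f_{ik})\mathbf{1}\{M(Z_i)\le L\} + (f_{ij}f_{ik})\mathbf{1}\{M(Z_i) > L\}$ for a threshold $L \asymp (n/\log d)^{1/4}$ or similar; apply the Markov-chain Bernstein inequality to the bounded part (with variance proxy controlled by $\sigma^4$ from Assumption~\ref{asm2} and the extra discount factor $(1+\gamma)/(1-\gamma)$ entering the effective sample size), and control the tail part deterministically in expectation plus a crude bound using $\E[M^4] < \infty$ and Markov's inequality. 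Optimizing $L$ against the Bernstein bound yields the claimed $\sqrt{(1+\gamma)/(1-\gamma)\cdot\log d/n}$ rate.

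Finally I would reassemble the three pieces: collecting the deterministic bias $O(\tau^{-(1+\delta)})$, the $\bm{\beta}$-perturbation $O(r^2/\tau^2)$, and the stochastic $O_p(\sqrt{(1+\gamma)/(1-\gamma)\cdot\log d/n})$, each multiplied by the factor $s$ coming from the cone bound $\Vert\bm{u}\Vert_1^2\le 16s$, gives
$$\bm{u}^\T\nabla^2 H_\tau(\bm{\beta})\bm{u} \ge \bm{u}^\T\bm{\Sigma}_n\bm{u} - sO_p\!\left(\sqrt{\frac{1+\gamma}{1-\gamma}\cdot\frac{\log d}{n}} + \frac{1}{\tau^{1+\delta}} + \frac{r^2}{\tau^2}\right) \ge 2\kappa - sO_p(\cdots),$$
uniformly over the specified set, which is exactly the claim. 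I expect the delicate points to be: making the $\bm{\beta}$-perturbation bound in step (ii) genuinely uniform over the $\ell_1$-ball (a net argument over $\bm{\beta}$ combined with Lipschitz-in-$\bm{\beta}$ control of the smoothed indicator, or a direct monotonicity/sandwiching trick exploiting that the indicator only changes on a thin shell), and bookkeeping the interplay between the envelope truncation level $L$ and the variance proxy so that the Markov-chain Bernstein bound delivers precisely the advertised spectral-gap discount without spurious logarithmic losses.
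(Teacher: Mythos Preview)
Your proposal is correct and follows essentially the same approach as the paper's proof: reduce to entrywise control via the cone bound $\Vert\bm{u}\Vert_1\le 4\sqrt{s}$, then handle the unbounded summands by truncation plus the Markov-chain Bernstein inequality of \citet*{jiang2018bernstein}. The only organizational difference is that the paper disposes of your step (ii) in one line via the deterministic split
\[
1\{|y_i-\bm{x}_i^\T\bm{\beta}|>\tau\}\le 1\{|\varepsilon_i|>\tau/2\}+1\{M(Z_i)>\tau/(2r)\},
\]
which removes the $\bm{\beta}$-dependence completely (no net, no smoothing of indicators); the second indicator then yields the $r^2/\tau^2$ term via $M^2(Z_i)\,1\{M(Z_i)>\tau/2r\}\le(2r/\tau)^2 M^4(Z_i)$ and the Markov-chain LLN, while the first indicator produces the $\tau^{-(1+\delta)}$ bias (Lemma~\ref{lem2}), and the remaining full empirical covariance is handled by Lemma~\ref{lem1}. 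So the ``delicate point'' you flag about uniformity over the $\ell_1$-ball never arises --- the sandwiching trick you list as an option is exactly what the paper does, and it is immediate.
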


\begin{proposition} \label{prop3}
Under Assumptions \ref{asm1}-\ref{asm3}, for some constant $C > 0$
\begin{align*}
\Vert \nabla H_\tau(\bm{\beta}^\star) \Vert_\infty
&\le \sqrt{\frac{1+\gamma}{1-\gamma}\cdot \frac{2\sigma^2 v_{\min\{\delta,1\}} \tau^{1-\min\{\delta,1\}} \log d}{n}}\\
&~~~+ \frac{1+\gamma}{1-\gamma}\cdot \frac{20 \tau \log d}{n} + C\tau^{-\min\{\delta,1\}}, ~~~w.h.p..
\end{align*}
\end{proposition}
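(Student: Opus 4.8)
The plan is to pass from the $\ell_\infty$-norm to a coordinatewise concentration bound together with a union bound over the $d$ coordinates. Since $y_i-\bm{x}_i^\T\bm{\beta}^\star=\varepsilon_i$, the $j$-th entry of the gradient is $[\nabla H_\tau(\bm{\beta}^\star)]_j=-\frac1n\sum_{i=1}^n h_\tau'(\varepsilon_i)x_{ij}$ with $h_\tau'(w)=\sgn(w)\min\{|w|,\tau\}$, so $\|\nabla H_\tau(\bm{\beta}^\star)\|_\infty=\max_{1\le j\le d}|\frac1n\sum_i h_\tau'(\varepsilon_i)x_{ij}|$. For a fixed $j$ I would first peel off the mean. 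Writing $\delta':=\min\{\delta,1\}$ and using Assumption~\ref{asm3} ($\E[\varepsilon_i|Z_i]=0$, and $\E[|\varepsilon_i|^{1+\delta'}|Z_i]\le v_{\delta'}$, the latter being Assumption~\ref{asm3} itself when $\delta\le1$ and following from it by Jensen when $\delta>1$) together with the elementary inequality $|h_\tau'(w)-w|=(|w|-\tau)_+\le|w|^{1+\delta'}\tau^{-\delta'}$, one gets $|\E[h_\tau'(\varepsilon_i)|Z_i]|=|\E[h_\tau'(\varepsilon_i)-\varepsilon_i|Z_i]|\le v_{\delta'}\tau^{-\delta'}$; combined with $\E|x_{ij}|\le(\E M^4)^{1/4}=\sigma$ from Assumption~\ref{asm2}, this yields the bias bound $|\E[h_\tau'(\varepsilon_i)x_{ij}]|\le\sigma v_{\delta'}\tau^{-\delta'}$, which is the source of the $C\tau^{-\min\{\delta,1\}}$ term.

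It then remains to control the centered sum $\frac1n\sum_i\{h_\tau'(\varepsilon_i)x_{ij}-\E[h_\tau'(\varepsilon_i)x_{ij}]\}$ in probability, uniformly over $j$. Here I would use that $h_\tau'(\varepsilon_i)x_{ij}=\phi_{ij}(Z_i,\varepsilon_i)$ with $\phi_{ij}(z,\varepsilon):=h_\tau'(\varepsilon)f_{ij}(z)$, and that $\{(Z_i,\varepsilon_i)\}_{i=1}^n$ is a stationary Markov chain (with invariant measure $\widetilde\pi(dz,d\varepsilon)=\pi(dz)g(\varepsilon|z)d\varepsilon$) whose spectral gap is at least $1-\gamma$. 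The gap claim is a short computation: its transition operator $\widetilde P$ satisfies $\widetilde P\phi=P(\Pi\phi)$, where $P$ is the transition operator of $\{Z_i\}$ and $\Pi$ is the conditional expectation onto functions of $z$ alone; for a $\widetilde\pi$-mean-zero $\phi$, $\Pi\phi$ is $\pi$-mean-zero and $\pi$-square-integrable, so $\|\widetilde P\phi\|=\|P(\Pi\phi)\|\le\gamma\|\Pi\phi\|\le\gamma\|\phi\|$ since $\Pi$ is a contraction. Thus the Bernstein-type inequality for non-identical functions of Markov chains of \citep{jiang2018bernstein} is the natural tool, with variance proxy controlled by $\E[(h_\tau'(\varepsilon_i)x_{ij})^2]=\E[x_{ij}^2\,\E[h_\tau'(\varepsilon_i)^2|Z_i]]\le v_{\delta'}\tau^{1-\delta'}\sigma^2$ (using $h_\tau'(w)^2\le|w|^{1+\delta'}\tau^{1-\delta'}$ and $\E[x_{ij}^2]\le(\E M^4)^{1/2}=\sigma^2$); this is exactly what produces the subgaussian term $\sqrt{\frac{1+\gamma}{1-\gamma}\cdot\frac{2\sigma^2 v_{\delta'}\tau^{1-\delta'}\log d}{n}}$.

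The main obstacle, and the technical core, is that $\phi_{ij}$ is \emph{unbounded} — the envelope $M$ has only a fourth moment — so the bounded-function Bernstein inequality does not apply directly. I would resolve this by a truncation argument: truncate $x_{ij}$ at a suitable level $T$, i.e. $\widetilde x_{ij}:=\sgn(x_{ij})\min\{|x_{ij}|,T\}$, apply the bounded-function Bernstein inequality of \citep{jiang2018bernstein} to $h_\tau'(\varepsilon_i)\widetilde x_{ij}$ (whose variance proxy is still at most $v_{\delta'}\sigma^2\tau^{1-\delta'}$ and whose uniform bound is $\tau T$, giving the subgaussian term above together with a subexponential term of order $\frac{1+\gamma}{1-\gamma}\cdot\frac{\tau T\log d}{n}$), and then control the remainder $\frac1n\sum_i h_\tau'(\varepsilon_i)(x_{ij}-\widetilde x_{ij})$. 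For the remainder I would split off the piece driven by $\E[h_\tau'(\varepsilon_i)|Z_i]$, which carries the small factor $v_{\delta'}\tau^{-\delta'}$ and is absorbed using only $\E[M^4]<\infty$ and $\frac1n\sum_i M(Z_i)^2=O_p(\sigma^2)$, and handle the conditionally-centered piece using the fourth-moment envelope and the conditional independence of the $\varepsilon_i$'s given $\{Z_i\}$; choosing $T$ to balance the subexponential term against the remainder, and then taking a union bound over $j\in\{1,\dots,d\}$, yields the stated bound with probability $1-o(1)$. I expect the delicate point to be exactly this balancing — checking that a fourth moment on $M$ suffices and that no spurious power of $n$ leaks into the subexponential term $\frac{1+\gamma}{1-\gamma}\cdot\frac{20\tau\log d}{n}$.
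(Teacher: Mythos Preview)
Your overall architecture is exactly the paper's: split each coordinate of $\nabla H_\tau(\bm{\beta}^\star)$ into a bias term (bounded by $\sigma v_{\delta'}\tau^{-\delta'}$ via $|h_\tau'(w)-w|\le |w|^{1+\delta'}\tau^{-\delta'}$) and a centered fluctuation, then control the fluctuation by a truncation argument combined with the Bernstein inequality for Markov chains, and finish with a union bound over $j$. Your observation that the augmented chain $\{(Z_i,\varepsilon_i)\}$ inherits spectral gap at least $1-\gamma$ is a nice point the paper only implicitly uses.

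The divergence is in \emph{what} you truncate, and this is not cosmetic. You truncate $x_{ij}$ at a level $T$, leaving a remainder $\frac1n\sum_i h_\tau'(\varepsilon_i)(x_{ij}-\widetilde x_{ij})$. The best $j$-uniform pointwise bound here is $|h_\tau'(\varepsilon_i)(x_{ij}-\widetilde x_{ij})|\le |h_\tau'(\varepsilon_i)|\,x_{ij}^2/T\le |\varepsilon_i|M(Z_i)^2/T$, whose average is $O_p(1/T)$ with no helpful power of $\tau$. Meanwhile the subexponential piece from Bernstein is of order $\frac{1+\gamma}{1-\gamma}\cdot\frac{\tau T\log d}{n}$. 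To match the stated second term you need $T\asymp 1$, but then the remainder is $O_p(1)$, not $O_p(\tau^{-\delta'})$; conversely, making the remainder small forces $T\gtrsim\tau^{\delta'}\to\infty$, which inflates the subexponential term. Your conditional-independence idea for the centered piece does not rescue this: a conditional Bernstein bound involves $\max_i M(Z_i)$ in its range term, and with only a fourth moment on $M$ this contributes an extra $n^{1/4}$ you cannot afford. So the balancing you correctly flagged as delicate actually fails with this truncation.

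The paper instead truncates the \emph{product} $h_\tau'(\varepsilon_i)x_{ij}$ at level $t$. Then $|\mc{T}_t(w)-w|\le w^2/t$ gives a remainder $\le (h_\tau'(\varepsilon_i))^2x_{ij}^2/t\le \tau^{1-\delta'}|\varepsilon_i|^{1+\delta'}M(Z_i)^2/t$, which (i) already carries the factor $\tau^{1-\delta'}$ because $h_\tau'$ enters squared, and (ii) is bounded by a $j$-free functional of the chain, so the LLN for Markov chains handles it uniformly without any conditional-independence gymnastics. Choosing $t=\frac{1+\gamma}{1-\gamma}\,\tau$ then makes both the mean and sample remainders $O_p(\tau^{-\delta'})$ and produces exactly the subexponential term $\frac{1+\gamma}{1-\gamma}\cdot\frac{20\tau\log d}{n}$. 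In short: truncate the product, not the covariate.
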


Since $\Vert \widehat{\bm{\beta}}_{\tau, \lambda} - \bm{\beta}^\star \Vert_2 \lesssim \sqrt{s}\Vert \nabla H_\tau(\bm{\beta}^\star)\Vert_\infty$, optimizing the bound of $\Vert \nabla H_\tau(\bm{\beta}^\star) \Vert_\infty$ in Proposition \ref{prop3} over $\tau$ get the optimal $\ell_2$-error bound for $\widehat{\bm{\beta}}_{\tau, \lambda}$. Collecting these pieces together proves Theorem \ref{thm}.

\begin{proof}[Proof of Theorem \ref{thm}]
Setting
\begin{align*}
\tau &= \left(\frac{1-\gamma}{1+\gamma} \cdot \frac{n}{\log d}\right)^{1/(1+\min\{\delta,1\})},\\
\lambda &= 2(\sqrt{2\sigma^2 v_{\min\{\delta,1\}}} + 20 + C)\tau^{-\min\{\delta,1\}},\\
r &= 48s\lambda/\kappa
\end{align*}
in Propositions \ref{prop2}-\ref{prop3} yields
\begin{equation*}
\begin{split}
& \inf\{ \bm{u}^\T\nabla^2 H_\tau(\bm{\beta}) \bm{u}: \Vert \bm{u} \Vert_2 = 1, \bm{u} \in \mathcal{C}, \Vert \bm{\beta} - \bm{\beta}^\star \Vert_1 \le r\} \ge \kappa,~~~w.h.p..\\
& \Vert \nabla H_\tau(\bm{\beta}^\star)\Vert_\infty \le \lambda / 2,~~~w.h.p..
\end{split}
\end{equation*}
Applying Proposition 1 to $(\mathcal{L}, \widehat{\bm{\beta}}) = (H_\tau,\widehat{\bm{\beta}}_{\tau, \lambda})$ completes the proof.
\end{proof}

\section{Other Technical Proofs} \label{sec5}

\subsection{Proof of Proposition \ref{prop1}}

The proof of Proposition \ref{prop1} consists of two steps.
\begin{enumerate}[label=(\alph*)]
\item $\widehat{\bm{\beta}} - \bm{\beta}^\star \in \mathcal{C}$, where $\mathcal{C}$ is defined in \eqref{cone}.
\item $\Vert \widehat{\bm{\beta}} - \bm{\beta}^\star \Vert_1 \le 4\sqrt{s}\Vert \widehat{\bm{\beta}} - \bm{\beta}^\star \Vert_2 \le r := 48s\lambda/\kappa$.
\end{enumerate}
Statement (a)  asserts that $\widehat{\bm{\beta}}$ belongs to the $\ell_1$ convex cone $\mathcal{C}$ around the estimand $\bm{\beta}^\star$. Its proof uses merely the convexity of $\mathcal{L}$, the optimality of $\widehat{\bm{\beta}}$, and H\"{o}lder's inequality for vectors, all of which are routine in the literature of high-dimensional linear regression. \citet*[Lemma 8]{sun2019adaptive} is a slightly stronger version of statement (a). Here we give a neat proof of statement (a) for the sake of readability.  Statement (b) provides similar but general results compared to \citet*[Theorem 8]{sun2019adaptive}: the former allows Markov-dependent covariate vectors with bounded forth moments (see Assumption \ref{asm2}) to fulfill the LRE condition in Proposition \ref{prop1}, while the latter requires a stronger condition that covariate vectors are i.i.d. and sub-Gaussian (see \citet*[Condition 5 and Theorem 8]{sun2019adaptive}). Moreover, our proof of statement (b) is simpler than that of \citet*[Theorem 8]{sun2019adaptive}, as the latter relies on an auxiliary result of the symmetric Bregman divergence (see \citet*[Lemma 2]{sun2019adaptive}).

\begin{proof}[Proof of Proposition \ref{prop1}(a)]
By the optimality of $\widehat{\bm{\beta}}$,
$$\mathcal{L}(\widehat{\bm{\beta}}) - \mathcal{L}(\bm{\beta}^\star) \le \lambda(\Vert \bm{\beta}^\star \Vert_1 - \Vert \widehat{\bm{\beta}} \Vert_1).$$
By the convexity of $\mathcal{L}$, H\"{o}lder's inequality, and the condition that $\Vert \nabla \mathcal{L}(\bm{\beta}^\star)\Vert_\infty \le \lambda/2$,
$$\mathcal{L}(\widehat{\bm{\beta}}) - \mathcal{L}(\bm{\beta}^\star) \ge \langle \nabla \mathcal{L}(\bm{\beta}^\star), \widehat{\bm{\beta}} -  \bm{\beta}^\star \rangle \ge - \Vert \nabla \mathcal{L}(\bm{\beta}^\star)\Vert_\infty \Vert \widehat{\bm{\beta}} -  \bm{\beta}^\star \Vert_1 \ge -\lambda \Vert \widehat{\bm{\beta}} -  \bm{\beta}^\star \Vert_1/2.$$
It follows that
$$2(\Vert \widehat{\bm{\beta}} \Vert_1 - \Vert \bm{\beta}^\star \Vert_1) \le \Vert \widehat{\bm{\beta}} - \bm{\beta}^\star \Vert_1.$$
On the left-hand side, using the fact that $\bm{\beta}^\star_{S^c} = \bm{0}$,
\begin{align*}
\Vert \widehat{\bm{\beta}} \Vert_1 - \Vert \bm{\beta}^\star \Vert_1
&= \Vert \widehat{\bm{\beta}}_{S^c} \Vert_1 + \Vert \widehat{\bm{\beta}}_S \Vert_1  - \Vert \bm{\beta}^\star_S \Vert_1\\
&= \Vert (\widehat{\bm{\beta}} - \bm{\beta}^\star)_{S^c} \Vert_1 + \Vert \bm{\beta}^\star_S + (\widehat{\bm{\beta}} - \bm{\beta}^\star)_S \Vert_1 - \Vert \bm{\beta}^\star_S \Vert_1\\
&\ge \Vert (\widehat{\bm{\beta}} - \bm{\beta}^\star)_{S^c} \Vert_1 - \Vert (\widehat{\bm{\beta}} - \bm{\beta}^\star)_S \Vert_1
\end{align*}
On the right-hand side,
$$\Vert \widehat{\bm{\beta}} - \bm{\beta}^\star \Vert_1 = \Vert (\widehat{\bm{\beta}} - \bm{\beta}^\star)_{S^c} \Vert_1 + \Vert (\widehat{\bm{\beta}} - \bm{\beta}^\star)_S \Vert_1.$$
Putting the last three displays together and rearranging terms completes the proof.
\end{proof}

\begin{proof}[Proof of Proposition \ref{prop1}(b)]
First, it follows from step (a) that $\Vert \widehat{\bm{\beta}} -  \bm{\beta}^\star \Vert_1 \le 4\sqrt{s} \Vert \widehat{\bm{\beta}} -  \bm{\beta}^\star \Vert_2$. It is left to show $4\sqrt{s} \Vert \widehat{\bm{\beta}} -  \bm{\beta}^\star \Vert_2 \le r$. Suppose for the sake of contradiction that $4\sqrt{s}\Vert \widehat{\bm{\beta}} - \bm{\beta}^\star \Vert_2 > r$. By the optimality of $\widehat{\bm{\beta}}$ and the integral form of the Taylor expansion,
\begin{align*}
0 &\ge [\mathcal{L}(\widehat{\bm{\beta}}) + \lambda \Vert \widehat{\bm{\beta}} \Vert_1] - [\mathcal{L}(\bm{\beta}^\star) + \lambda \Vert \bm{\beta}^\star \Vert_1]\\
&= \lambda(\Vert \widehat{\bm{\beta}} \Vert_1 - \Vert \bm{\beta}^\star \Vert_1) + \langle \nabla \mathcal{L}(\bm{\beta}^\star), \widehat{\bm{\beta}} -  \bm{\beta}^\star \rangle\\
&~~~+ \int_0^1 (1-t) (\widehat{\bm{\beta}} -  \bm{\beta}^\star)^\T \nabla^2 \mathcal{L}(\bm{\beta}^\star + t(\widehat{\bm{\beta}}-\bm{\beta}^\star))(\widehat{\bm{\beta}} -  \bm{\beta}^\star)dt
\end{align*}
For the first term,
$$\lambda(\Vert \widehat{\bm{\beta}} \Vert_1 - \Vert \bm{\beta}^\star \Vert_1) \ge -\lambda \Vert \widehat{\bm{\beta}} - \bm{\beta}^\star \Vert_1 \ge -4\sqrt{s}\lambda \Vert \widehat{\bm{\beta}} - \bm{\beta}^\star \Vert_2.$$
For the second term, from the condition that $\Vert \nabla \mathcal{L}(\bm{\beta}^\star) \Vert_\infty \le \lambda/2$, it follows that
\begin{align*}
|\langle \nabla \mathcal{L}(\bm{\beta}^\star), \widehat{\bm{\beta}} -  \bm{\beta}^\star \rangle|
& \le \Vert (\nabla \mathcal{L}(\bm{\beta}^\star))_S \Vert_2 \Vert (\widehat{\bm{\beta}} -  \bm{\beta}^\star)_S \Vert_2 + \Vert (\nabla \mathcal{L}(\bm{\beta}^\star))_{S^c} \Vert_\infty \Vert (\widehat{\bm{\beta}} -  \bm{\beta}^\star)_{S^c} \Vert_1\\
&\le \sqrt{s} \Vert (\nabla \mathcal{L}(\bm{\beta}^\star))_S \Vert_\infty \Vert (\widehat{\bm{\beta}} -  \bm{\beta}^\star)_S \Vert_2 + \Vert (\nabla \mathcal{L}(\bm{\beta}^\star))_{S^c} \Vert_\infty \Vert (\widehat{\bm{\beta}} -  \bm{\beta}^\star)_{S^c} \Vert_1\\
&\le \frac{\sqrt{s} \lambda}{2} \Vert (\widehat{\bm{\beta}} -  \bm{\beta}^\star)_S \Vert_2 + \frac{3\lambda}{2} \Vert (\widehat{\bm{\beta}} -  \bm{\beta}^\star)_S \Vert_1 \le 2\sqrt{s}\lambda \Vert \widehat{\bm{\beta}} -  \bm{\beta}^\star \Vert_2.
\end{align*}
Proceed to lower bound the third term. To this end, note that $r / 4\sqrt{s} \Vert \widehat{\bm{\beta}}  - \bm{\beta}^\star \Vert_2 < 1$ by the initial assumption. For any $0 \le t \le r / 4\sqrt{s} \Vert \widehat{\bm{\beta}}  - \bm{\beta}^\star \Vert_2$,
$$\Vert [\bm{\beta}^\star + t(\widehat{\bm{\beta}}-\bm{\beta}^\star)] - \bm{\beta}^\star \Vert_1 \le t \Vert \widehat{\bm{\beta}}-\bm{\beta}^\star \Vert_1 \le t \times 4\sqrt{s}\Vert \widehat{\bm{\beta}}-\bm{\beta}^\star \Vert_2 \le r.$$
Combining it with step (a) that $\widehat{\bm{\beta}} - \bm{\beta}^\star \in \mathcal{C}$ and the LRE condition yield a lower bound for the third term
\begin{align*}
&~~~\int_0^1 (1-t) (\widehat{\bm{\beta}} -  \bm{\beta}^\star)^\T \nabla^2 \mathcal{L}(\bm{\beta}^\star + t(\widehat{\bm{\beta}}-\bm{\beta}^\star))(\widehat{\bm{\beta}} -  \bm{\beta}^\star)dt\\
&\ge \int_0^{r / 4\sqrt{s} \Vert \widehat{\bm{\beta}}  - \bm{\beta}^\star \Vert_2} (1-t) (\widehat{\bm{\beta}} -  \bm{\beta}^\star)^\T \nabla^2 \mathcal{L}(\bm{\beta}^\star + t(\widehat{\bm{\beta}}-\bm{\beta}^\star))(\widehat{\bm{\beta}} -  \bm{\beta}^\star)dt\\
&\ge \int_0^{r / 4\sqrt{s} \Vert \widehat{\bm{\beta}}  - \bm{\beta}^\star \Vert_2} (1-t) \kappa \Vert \widehat{\bm{\beta}}-\bm{\beta}^\star\Vert_2^2 dt = \frac{\kappa r}{4\sqrt{s}} \Vert \widehat{\bm{\beta}}-\bm{\beta}^\star\Vert_2 - \frac{\kappa r^2}{32s}
\end{align*}
Putting the lower bounds of three terms together with the scaling of $r = 48s\lambda / \kappa$ yields
$$0 \ge \left(\frac{\kappa r}{4\sqrt{s}} - 6\sqrt{s}\lambda\right) \Vert \widehat{\bm{\beta}} -  \bm{\beta}^\star \Vert_2 - \frac{\kappa r^2}{32s} = 6\sqrt{s}\lambda\Vert \widehat{\bm{\beta}} -  \bm{\beta}^\star \Vert_2 - \frac{3\lambda r}{2},$$
which contradicts to the initial assumption that $4\sqrt{s}\Vert \widehat{\bm{\beta}} - \bm{\beta}^\star \Vert_2 > r$.
\end{proof}

\subsection{Proof of Proposition \ref{prop2}}
We first present two lemmas, which are useful in the proof of Proposition \ref{prop2}. Proofs of these two lemmas are put at the end of this subsection.

\begin{lemma} \label{lem1}
Under Assumptions \ref{asm1}-\ref{asm2},
$$\max_{1 \le j, k \le d} \left|\frac{1}{n}\sum_{i=1}^n x_{ij}x_{ik} - \frac{1}{n}\sum_{i=1}^n \E[x_{ij}x_{jk}]\right| = O_p\left(\sqrt{\frac{1+\gamma}{1-\gamma} \cdot \frac{\log d}{n}}\right).$$
\end{lemma}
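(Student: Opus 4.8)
The plan is to deduce Lemma \ref{lem1} from the Bernstein-type inequality for \emph{bounded}, non-identical functions of a Markov chain established in \citep{jiang2018bernstein}, by a truncation argument that uses the fourth-moment envelope of Assumption \ref{asm2}. Fix a pair $(j,k)$ and write $\phi_i(z):=f_{ij}(z)f_{ik}(z)$, so that $x_{ij}x_{ik}=\phi_i(Z_i)$, $|\phi_i(z)|\le M(z)^2$ for $\pi$-almost every $z$, and (by stationarity) $\E[x_{ij}x_{ik}]=\E_\pi[\phi_i]$. For a truncation level $T>0$ to be chosen, I would split $\phi_i=\psi_i+\rho_i$ with $\psi_i(z):=\phi_i(z)\mathbf{1}\{M(z)\le T\}$ a function of $Z_i$ bounded by $T^2$, and $\rho_i:=\phi_i-\psi_i=\phi_i\,\mathbf{1}\{M>T\}$, and then write
$$\frac1n\sum_{i=1}^n\big(\phi_i(Z_i)-\E_\pi[\phi_i]\big)=\frac1n\sum_{i=1}^n\big(\psi_i(Z_i)-\E_\pi[\psi_i]\big)+\frac1n\sum_{i=1}^n\big(\rho_i(Z_i)-\E_\pi[\rho_i]\big).$$

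For the bounded piece, the centered summands $\psi_i(Z_i)-\E_\pi[\psi_i]$ have range at most $2T^2$ and average stationary variance $\tfrac1n\sum_i\mathrm{Var}_\pi(\psi_i)\le\E_\pi[M^4]=\sigma^4$, so \citep{jiang2018bernstein} furnishes a bound of the form $\P\big(\big|\tfrac1n\sum_i(\psi_i(Z_i)-\E_\pi[\psi_i])\big|\ge t\big)\le 2\exp\!\big(-c_0\,\tfrac{1-\gamma}{1+\gamma}\cdot\tfrac{nt^2}{\sigma^4+T^2 t}\big)$ for a universal constant $c_0$. Taking $t=C_1\sqrt{\tfrac{1+\gamma}{1-\gamma}\cdot\tfrac{\log d}{n}}$ makes the exponent of order $-C_1^2\log d/(\sigma^4+T^2t)$, so provided $T$ is small enough that $T^2 t\lesssim\sigma^4$ this is at most $d^{-3}$ once $C_1$ is a sufficiently large constant; a union bound over the at most $d^2$ pairs $(j,k)$ then still leaves probability $o(1)$.

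For the remainder piece no union bound is needed, because uniformly over $(j,k)$ its absolute value is at most $\tfrac1n\sum_i M(Z_i)^2\mathbf{1}\{M(Z_i)>T\}+\E_\pi\!\big[M^2\mathbf{1}\{M>T\}\big]$; the second term is $\le\sigma^4/T^2$ deterministically since $M^2\mathbf{1}\{M>T\}\le M^4/T^2$, and the first is a single nonnegative random variable with mean $\le\sigma^4/T^2$, hence $O_p(\sigma^4/T^2)$ by Markov's inequality. It then remains to pick $T$ meeting the two requirements $T^2 t\lesssim\sigma^4$ (for the Bernstein step) and $\sigma^4/T^2\lesssim t$ (for the remainder); both hold with $T^2\asymp\sigma^4\big(\tfrac{1-\gamma}{1+\gamma}\cdot\tfrac{n}{\log d}\big)^{1/2}$, and both pieces are then $O_p\!\big(\sqrt{\tfrac{1+\gamma}{1-\gamma}\cdot\tfrac{\log d}{n}}\big)$, which is the claimed rate.

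I expect the only real obstacle to be this last calibration step, which is exactly where the analysis departs from the independent setting: \citep{jiang2018bernstein} supplies Bernstein only in the bounded case, so one must truncate, and one needs the truncation level tied to the effective sample size $(1-\gamma)n/(1+\gamma)$ so that the Bernstein range term $T^2t$ stays subdominant to the variance term $\sigma^4$ while the discarded tail (together with the resulting bias in $\E_\pi[\phi_i]$) is no larger than the target rate. That both constraints can be met simultaneously is precisely what the $L^4$ envelope in Assumption \ref{asm2} buys, over and above the $L^2$ control that already drives the Bernstein variance term.
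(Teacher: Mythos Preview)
Your proposal is correct and takes essentially the same approach as the paper: truncate, bound the truncation bias/remainder via the fourth-moment envelope, apply the Markov-chain Bernstein inequality of \citet{jiang2018bernstein} to the bounded part, and calibrate the truncation level to order $\big(\tfrac{1-\gamma}{1+\gamma}\cdot\tfrac{n}{\log d}\big)^{1/2}$. The only cosmetic difference is that the paper truncates the product $x_{ij}x_{ik}$ directly with the clipping operator $\mc{T}_t(w)=(-t)\vee w\wedge t$ (so your $T^2$ plays the role of its $t$) and handles the empirical truncation error via the Markov law of large numbers rather than Markov's inequality.
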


\begin{lemma} \label{lem2}
Under Assumptions \ref{asm1}-\ref{asm3},
$$\frac{1}{n} \sum_{i=1}^n M^2(Z_i)1\{|\varepsilon_i| > \tau/2\}  \le \sigma^2 \left(\frac{2}{\tau}\right)^{1+\delta} v_\delta + O_p\left(\sqrt{\frac{1+\gamma}{1-\gamma} \cdot \frac{\log d}{n}}\right).$$
\end{lemma}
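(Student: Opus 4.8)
The plan is to separate the randomness of the Markov chain from that of the errors by conditioning on the trajectory $\{Z_i\}_{i=1}^n$, control the resulting conditional mean deterministically via the $(1+\delta)$-moment bound of Assumption~\ref{asm3}, and dispatch the conditional fluctuation by a variance computation. First, since the errors are conditionally independent given $\{Z_i\}_{i=1}^n$ and $\E[|\varepsilon_i|^{1+\delta}\mid Z_i]<v_\delta$ $\pi$-a.s., Markov's inequality gives $p_i:=\P(|\varepsilon_i|>\tau/2\mid Z_i)\le \E[|\varepsilon_i|^{1+\delta}\mid Z_i]/(\tau/2)^{1+\delta}<v_\delta(2/\tau)^{1+\delta}$ $\pi$-a.s. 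I would then split
\[
\frac1n\sum_{i=1}^n M^2(Z_i)\,1\{|\varepsilon_i|>\tau/2\}
= \underbrace{\frac1n\sum_{i=1}^n M^2(Z_i)\bigl(1\{|\varepsilon_i|>\tau/2\}-p_i\bigr)}_{=:D_n}
+ \underbrace{\frac1n\sum_{i=1}^n M^2(Z_i)\,p_i}_{=:A_n},
\]
so that $A_n\le v_\delta(2/\tau)^{1+\delta}\cdot\frac1n\sum_{i=1}^n M^2(Z_i)$.

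For the conditional-mean term $A_n$: by Cauchy--Schwarz and Assumption~\ref{asm2}, $\E[M^2(Z_1)]\le(\E[M^4(Z_1)])^{1/2}=\sigma^2$, and by stationarity $\frac1n\sum_i\E[M^2(Z_i)]=\E[M^2(Z_1)]$. Since $M^2$ is a single function of $Z_i$ with finite $\pi$-second moment $\sigma^4$, exactly the same truncation-plus-Bernstein argument used to prove Lemma~\ref{lem1} (which already controls quantities dominated by $M^2(Z_i)$) yields $\frac1n\sum_i M^2(Z_i)=\E[M^2(Z_1)]+O_p(\sqrt{\tfrac{1+\gamma}{1-\gamma}\cdot\tfrac{\log d}{n}})\le\sigma^2+O_p(\sqrt{\tfrac{1+\gamma}{1-\gamma}\cdot\tfrac{\log d}{n}})$. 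Multiplying by the deterministic factor $v_\delta(2/\tau)^{1+\delta}$, which is bounded (and is $o(1)$ in the regime $\tau\to\infty$ of Theorem~\ref{thm}), gives $A_n\le \sigma^2 v_\delta(2/\tau)^{1+\delta}+O_p(\sqrt{\tfrac{1+\gamma}{1-\gamma}\cdot\tfrac{\log d}{n}})$.

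For the conditional-fluctuation term $D_n$: conditionally on $\{Z_i\}_{i=1}^n$ the summands of $nD_n$ are independent and mean zero, with conditional variance $M^4(Z_i)p_i(1-p_i)\le M^4(Z_i)v_\delta(2/\tau)^{1+\delta}$, so $\mathrm{Var}(D_n\mid\{Z_i\})\le \tfrac{v_\delta(2/\tau)^{1+\delta}}{n^2}\sum_i M^4(Z_i)$; taking expectations (law of total variance, using $\E[D_n\mid\{Z_i\}]=0$) gives $\mathrm{Var}(D_n)\le \sigma^4 v_\delta(2/\tau)^{1+\delta}/n$. Since $\tau$ is bounded away from $0$, Chebyshev then yields $D_n=O_p(n^{-1/2})$, which is dominated by $O_p(\sqrt{\tfrac{1+\gamma}{1-\gamma}\cdot\tfrac{\log d}{n}})$ because $\tfrac{1+\gamma}{1-\gamma}\ge1$ and $\log d\ge1$. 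Adding the bounds on $A_n$ and $D_n$ completes the proof.

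The only genuinely non-routine ingredient is the concentration of $\frac1n\sum_i M^2(Z_i)$: $M^2$ is an \emph{unbounded} function of the Markov chain, so the Bernstein inequality of \citet{jiang2018bernstein} cannot be applied directly, and one must go through the truncation extension already developed in the proof of Lemma~\ref{lem1}. Everything else --- the $(1+\delta)$-moment tail bound, the conditional-variance estimate, and the crude comparison $n^{-1/2}\le\sqrt{\tfrac{1+\gamma}{1-\gamma}\cdot\tfrac{\log d}{n}}$ --- is elementary.
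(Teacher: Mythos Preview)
Your proof is correct but proceeds along a genuinely different route from the paper's. The paper does \emph{not} condition on $\{Z_i\}$; instead it treats the augmented process $\{(Z_i,\varepsilon_i)\}$ as a stationary Markov chain (with the same spectral gap) and applies the truncation-plus-Bernstein machinery of Lemma~\ref{lem1} directly to the single function $M^2(Z_i)1\{|\varepsilon_i|>\tau/2\}$, splitting
\[
\frac1n\sum_i M^2(Z_i)1\{|\varepsilon_i|>\tau/2\}-\E\bigl[M^2(Z_1)1\{|\varepsilon_1|>\tau/2\}\bigr]
\]
into the usual three pieces $D_1+D_2+D_3$ and bounding the expectation by $\sigma^2(2/\tau)^{1+\delta}v_\delta$ via the same conditional-Markov argument you use for $p_i$. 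Your decomposition $A_n+D_n$ instead isolates the $\varepsilon$-randomness and dispatches it by a bare Chebyshev bound, so that Bernstein's inequality is only invoked for $\tfrac1n\sum_i M^2(Z_i)$, a function of the \emph{original} chain $\{Z_i\}$ alone. This buys you two things: you never need to argue that the augmented chain inherits the spectral gap $1-\gamma$, and the fluctuation step is more elementary. The paper's approach, in turn, is more uniform with its other proofs and yields a remainder term that does not carry the extra factor $(2/\tau)^{1+\delta}$; your bound implicitly requires $\tau$ bounded away from $0$ (which you note), whereas the paper's $O_p$ term is uniform in $\tau$.
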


\begin{proof}[Proof of Proposition \ref{prop2}]
For any $\bm{\beta}$ such that $\Vert \bm{\beta} - \bm{\beta}^\star \Vert_1 \le r$,
\begin{align*}
1\{|y_i - \bm{x}_i^\T \bm{\beta}| > \tau \}
&\le 1\{|\varepsilon_i| > \tau/2 \} + 1\{|\bm{x}_i^\T (\bm{\beta} - \bm{\beta}^\star)| > \tau/2 \}\\
&\le 1\{|\varepsilon_i| > \tau/2 \} + 1\{ \Vert \bm{x}_i \Vert_\infty > \tau/2r \}\\
&\le 1\{|\varepsilon_i| > \tau/2 \} + 1\{ M(Z_i)  > \tau/2r \},
\end{align*}
where $M$ is the envelop function introduced by Assumption \ref{asm2}. Let
\begin{align*}
\epsilon_{1n} &= \max_{1 \le j, k \le d} \left| \left[\bm{\Sigma}_n - \frac{1}{n}\sum_{i=1}^n \bm{x}_i \bm{x}_i^\T\right]_{j, k}\right| = \max_{1 \le j, k \le d} \left| \frac{1}{n}\sum_{i=1}^n x_{ij}x_{ik} - \frac{1}{n}\sum_{i=1}^n \E[x_{ij}x_{ik}]\right|\\
\epsilon_{2n} &= \max_{1 \le j, k \le d} \left| \left[ \frac{1}{n}\sum_{i=1}^n \bm{x}_i \bm{x}_i^\T 1\{|\varepsilon_i| \ge \tau/2 \}\right]_{j, k}\right| \le \frac{1}{n} \sum_{i=1}^n M^2(Z_i)1\{|\varepsilon_i| \ge \tau/2 \}\\
\epsilon_{3n} &= \max_{1 \le j, k \le d} \left| \left[ \frac{1}{n}\sum_{i=1}^n \bm{x}_i \bm{x}_i^\T 1\{M(Z_i) \ge \tau/2r \}\right]_{j, k}\right| \le (2r/\tau)^2 \times \frac{1}{n} \sum_{i=1}^n M^4(Z_i).
\end{align*}
For any $\bm{u} \in \mathcal{C}$ such that $\Vert \bm{u} \Vert_2 = 1$, we have $\Vert \bm{u} \Vert_1 \le 4\sqrt{s}$. Thus,
\begin{align*}
\bm{u}^\T \nabla^2 H_\tau(\bm{\beta}) \bm{u}
&= \bm{u}^\T \left[ \frac{1}{n}\sum_{i=1}^n \bm{x}_i \bm{x}_i^\T 1\{|y_i - \bm{x}_i^\T \bm{\beta}| \le \tau \}\right] \bm{u},\\
&\ge \bm{u}^\T \bm{\Sigma}_n \bm{u} - \bm{u}^\T\left[\bm{\Sigma}_n - \frac{1}{n}\sum_{i=1}^n \bm{x}_i \bm{x}_i^\T\right]\bm{u}\\
&~~~~~~- \bm{u}^\T \left[\frac{1}{n}\sum_{i=1}^n \bm{x}_i\bm{x}_i^\T 1\{|\varepsilon_i| > \tau/2\} \right]\bm{u}\\
&~~~~~~- \bm{u}^\T \left[\frac{1}{n}\sum_{i=1}^n \bm{x}_i\bm{x}_i^\T 1\{M(Z_i) > \tau/2r\} \right]\bm{u}.\\
&\ge \bm{u}^\T \bm{\Sigma}_n \bm{u} - \epsilon_{1n}\Vert \bm{u} \Vert_1^2 - \epsilon_{2n}\Vert \bm{u} \Vert_1^2 - \epsilon_{3n}\Vert \bm{u} \Vert_1^2,\\
&\ge 2\kappa - 16s(\epsilon_{1n} + \epsilon_{2n} + \epsilon_{3n}).
\end{align*}
Further bounding $\epsilon_{1n}$ by Lemma \ref{lem1}, $\epsilon_{2n}$ by Lemma \ref{lem2} and
$$\epsilon_{3n} \le  (2r/\tau)^2 \times (\sigma^4 + o_p(1))$$
by the law of large number for Markov chains \citep[Theorem 17.1.2]{meyn2012markov} completes the proof.
\end{proof}

\begin{proof}[Proof of Lemma \ref{lem1}]
Define a truncation operator
\begin{equation} \label{truncation}
\mathcal{T}_t(w) =
\begin{cases}
-t & \mbox{if } w < -t\\
w & \mbox{if } |w| \le t\\
+t & \mbox{if } w > +t.\\
\end{cases}
\end{equation}
For each $1 \le j \le d$ and each $1 \le k \le d$,
$$\left|\frac{1}{n}\sum_{i=1}^n x_{ij}x_{ik} - \frac{1}{n}\sum_{i=1}^n \E[x_{ij}x_{jk}] \right|\le D_{1jk} + D_{2jk} + D_{3jk},$$
where
\begin{align*}
D_{1jk} &=  \left|\frac{1}{n}\sum_{i=1}^n \E \mathcal{T}_t[x_{ij}x_{ik}] - \frac{1}{n}\sum_{i=1}^n \E[x_{ij}x_{ik}]\right|,\\
D_{2jk} &= \left|\frac{1}{n}\sum_{i=1}^n \mathcal{T}_t[x_{ij}x_{ik}] - \frac{1}{n}\sum_{i=1}^n x_{ij}x_{ik} \right|,\\
D_{3jk} &=  \left|\frac{1}{n}\sum_{i=1}^n \mathcal{T}_t[x_{ij}x_{ik}] - \frac{1}{n}\sum_{i=1}^n \E \mathcal{T}_t[x_{ij}x_{ik}] \right|.
\end{align*}
Using the fact that $|\mathcal{T}_t(w) - w| \le |w|1\{|w|>t\} \le |w|^2/t$, Cauchy-Schwarz inequality and the bounds on fourth moment of the envelop function $M:z \mapsto \mathbb{R}$ in Assumption \ref{asm2},
\begin{align*}
\max_{j,k} D_{1jk} &\le \max_{j,k}  \frac{1}{tn}\sum_{i=1}^n \E[|x_{ij}x_{ik}|^2] \le \max_{j,k} \frac{1}{t} \sqrt{\frac{1}{n}\sum_{i=1}^n \E[x_{ij}^4] \times \frac{1}{n}\sum_{i=1}^n \E[x_{ik}^4]}\\
&\le \frac{1}{tn} \sum_{i=1}^n \E[M^4(Z_i)] \le \frac{\sigma^4}{t}.
\end{align*}
By a similar argument and the law of large number for Markov chains \citep[Theorem 17.1.2]{meyn2012markov}
$$\max_{j,k} D_{2jk} \le \frac{1}{tn} \sum_{i=1}^n M^4(Z_i) = \frac{\sigma^4 + o_p(1)}{t}.$$
Noting that $|\mathcal{T}_t[W] - \E \mathcal{T}_t[W]| \le 2t$ almost surely, we apply the Bernstein's inequality for Markov chains in \citep*[Theorem 1.1]{jiang2018bernstein} and yields
$$\P\left(D_{3jk} > \epsilon \right) \le 2\exp{\left(-\frac{n\epsilon^2}{\frac{1+\gamma}{1-\gamma} \cdot V_{n,t} + 10t\epsilon }\right)} \le 2\exp{\left(-\frac{n\epsilon^2}{\frac{1+\gamma}{1-\gamma} \cdot \sigma^4 + 10t\epsilon }\right)},$$
where
$$V_{n,t} = \frac{1}{n} \sum_{i=1}^n \text{Var}\{\mathcal{T}_t[x_{ij}x_{ik}]\} \le \frac{1}{n} \sum_{i=1}^n \E[x_{ij}^2x_{ik}^2] \le \sigma^4.$$
A union bound delivers that
$$\P\left( \max_{j,k} D_{3jk} > \epsilon \right) \le 2p^2\exp{\left(-\frac{n\epsilon^2}{\frac{1+\gamma}{1-\gamma} \cdot \sigma^4 + 10t\epsilon }\right)}.$$
Let $t = \frac{1}{10c}\sqrt{\frac{1-\gamma}{1+\gamma}\cdot \frac{n}{\log d}}$, and $\epsilon = c\sqrt{\frac{1+\gamma}{1-\gamma}\cdot \frac{\log d}{n}}$ for some $c > \sqrt{2(\sigma^4 + 1)}$. Then
$$\max_{j,k} D_{3jk} \ge c\sqrt{\frac{1+\gamma}{1-\gamma}\cdot \frac{\log d}{n}}$$
with probability at most
$$2\exp{\left(-\left(\frac{c^2}{\sigma^4 + \frac{1-\gamma}{1+\gamma}}-2\right)\log d \right)} \le 2\exp{\left(-\left(\frac{c^2}{\sigma^4 + 1} - 2\right)\log d\right)}.$$
Collecting these pieces together completes the proof.
\end{proof}

\begin{proof}[Proof of Lemma \ref{lem2}]
Note that $\{(Z_i, \varepsilon_i)\}_{i=1}^n$ are identically distributed. Write
\begin{align*}
\E\left[M^2(Z_1) 1\{|\varepsilon_1| > \tau/2\}\right]
&= \E\left[M^2(Z_1) \P(|\varepsilon_1| > \tau/2| Z_1)\right] \\
&\le \E\left[M^2(Z_1) \left(\frac{2}{\tau}\right)^{1+\delta} \E[|\varepsilon_1|^{1+\delta}|Z_1]\right] \\
&\le \E[M^2(Z_1)] \left(\frac{2}{\tau}\right)^{1+\delta} v_\delta \le \sigma^2 \left(\frac{2}{\tau}\right)^{1+\delta} v_\delta.
\end{align*}
It is left to show
$$\left|\frac{1}{n}\sum_{i=1}^n M^2(Z_i)1\{|\varepsilon_i| > \tau/2\} - \E\left[M^2(Z_1)1\{|\varepsilon_1| > \tau/2\}\right]\right| = O_p\left(\sqrt{\frac{1+\gamma}{1-\gamma} \cdot \frac{\log d}{n}}\right).$$
Recall that $\mathcal{T}_t$ is the truncation operator with threshold $t$ defined in \eqref{truncation}. Break down the quantity on the left-hand side of the last display into three terms.
$$D_1 + D_2 + D_3,$$
where
\begin{align*}
D_1 &=  \left|\E \mathcal{T}_t[M^2(Z_1) 1\{|\varepsilon_1| > \tau/2\}] - \E[M^2(Z_1)1\{|\varepsilon_1| > \tau/2\}]\right|,\\
D_2 &= \left|\frac{1}{n}\sum_{i=1}^n \mathcal{T}_t[M^2(Z_i)1\{|\varepsilon_i| > \tau/2\}] - \frac{1}{n}\sum_{i=1}^n M^2(Z_i) 1\{|\varepsilon_i| > \tau/2\} \right|,\\
D_3 &=  \left|\frac{1}{n}\sum_{i=1}^n \mathcal{T}_t[M^2(Z_i)1\{|\varepsilon_i| > \tau/2\}] - \E \mathcal{T}_t[M^2(Z_1)1\{|\varepsilon_1| > \tau/2\}] \right|.
\end{align*}
Using the fact that $|\mathcal{T}_t(w) - w| \le |w|1\{|w|>t\} \le |w|^2/t$, Cauchy-Schwarz inequality and the bounds on fourth moment of $M:z \mapsto \mathbb{R}$ in Assumption \ref{asm2},
$$D_1 \le \frac{\E[M^4(Z_1)]}{t} = \frac{\sigma^4}{t}.$$
By a similar argument and the law of large number for Markov chains \citep[Theorem 17.1.2]{meyn2012markov}
$$D_2 \le \frac{1}{tn} \sum_{i=1}^n M^4(Z_i) = \frac{\sigma^4 + o_p(1)}{t}.$$
Noting that $|\mathcal{T}_t[W] - \E \mathcal{T}_t[W]| \le 2t$ almost surely, we apply the Bernstein's inequality for geometrically ergodic Markov chains in \citep*[Theorem 1.1]{jiang2018bernstein} and yields
$$\P\left(D_3 > \epsilon \right) \le 2\exp{\left(-\frac{n\epsilon^2}{\frac{1+\gamma}{1-\gamma} \cdot V_t + 10t\epsilon }\right)} \le 2\exp{\left(-\frac{n\epsilon^2}{\frac{1+\gamma}{1-\gamma} \cdot \sigma^4 + 10t\epsilon }\right)},$$
where
$$V_t = \text{Var}\{\mathcal{T}_t[M^2(Z_1)\{|\varepsilon_2| > \tau/2\}]\} \le \E[M^4(Z_1)] = \sigma^4.$$
Let $t = \frac{1}{10c}\sqrt{\frac{1-\gamma}{1+\gamma}\cdot \frac{n}{\log d}}$, and $\epsilon = c\sqrt{\frac{1+\gamma}{1-\gamma}\cdot \frac{\log d}{n}}$ for some $c > 0$ then
$$D_3 \ge c\sqrt{\frac{1+\gamma}{1-\gamma}\cdot \frac{\log d}{n}}$$
with probability at most
$$\exp{\left(-\frac{c^2 \log d}{\sigma^4 + \frac{1-\gamma}{1+\gamma}}\right)}.$$
Collecting these pieces together completes the proof.
\end{proof}

\subsection{Proof of Proposition \ref{prop3}}
\begin{proof}[Proof of Proposition \ref{prop3}]
We merely consider the case of $0 < \delta \le 1$. The proof for the case of $\delta > 1$ is the same with that for $\delta = 1$. Recall that $\mathcal{T}_t$ is the truncation operator with threshold $t$ defined in \eqref{truncation}, and $h_\tau$ is the Huber loss with the robustification parameter $\tau$ defined in \eqref{Huber2}. For each $1 \le j \le d$,
$$-\nabla_j H_\tau(\bm{\beta}^\star) = -\frac{\partial H_\tau(\bm{\beta}^\star)}{\partial \beta_j} = \frac{1}{n} \sum_{i=1}^n \mathcal{T}_\tau(\varepsilon_i)x_{ij},$$
Note that $\E[\epsilon_i|Z_i] = 0$ almost surely in Assumption \ref{asm3}. Write
\begin{align*}
|\E[\nabla_j H_\tau(\bm{\beta}^\star)]|
&\le \frac{1}{n} \sum_{i=1}^n \E[\mathcal{T}_\tau(\varepsilon_i)|x_{ij}|] = \frac{1}{n} \sum_{i=1}^n \E[\E[\mathcal{T}_\tau(\varepsilon_i)|\bm{x}_i]|x_{ij}|]\\
&= \frac{1}{n} \sum_{i=1}^n \E[\{\E[\varepsilon_i|\bm{x}_i] - \E[\epsilon_i|Z_i]\} |x_{ij}|]\\
&\le \frac{1}{n} \sum_{i=1}^n \E[\E[|\varepsilon_i|1\{|\varepsilon_i|>\tau\}|Z_i]|x_{ij}|]\\
&\le \frac{1}{n} \sum_{i=1}^n \E[\E[|\varepsilon_i|^{1+\delta}\tau^{-\delta}|Z_i]|x_{ij}|] \le \sigma v_\delta \tau^{-\delta}
\end{align*}
Next, we use a similar argument to that in Lemmas \ref{lem1}-\ref{lem2} to bound the deviations of $\nabla_j H_\tau(\bm{\beta}^\star)$ from their expectations. Break down the deviation into three terms as follows.
$$|\nabla_j H_\tau(\bm{\beta}^\star) - \E[\nabla_j H_\tau(\bm{\beta}^\star)]| \le D_{1j} + D_{2j} + D_{3j},$$
where
\begin{align*}
D_{1j} &=  \left|\frac{1}{n}\sum_{i=1}^n \E \mathcal{T}_t[\mathcal{T}_\tau(\varepsilon_i)x_{ij}] - \frac{1}{n}\sum_{i=1}^n \E[\mathcal{T}_\tau(\varepsilon_i)x_{ij}]\right|,\\
D_{2j} &= \left|\frac{1}{n}\sum_{i=1}^n \mathcal{T}_t[\mathcal{T}_\tau(\varepsilon_i)x_{ij}] - \frac{1}{n}\sum_{i=1}^n \mathcal{T}_\tau(\varepsilon_i)x_{ij} \right|,\\
D_{3j} &=  \left|\frac{1}{n}\sum_{i=1}^n \mathcal{T}_t[\mathcal{T}_\tau(\varepsilon_i)x_{ij}] - \frac{1}{n}\sum_{i=1}^n \E \mathcal{T}_t[\mathcal{T}_\tau(\varepsilon_i)x_{ij}] \right|.
\end{align*}
Using the fact that $|\mathcal{T}_t(w) - w| \le |w|1\{|w|>t\} \le |w|^2/t$, Cauchy-Schwarz inequality, the bounds on moments in Assumptions \ref{asm2}-\ref{asm3},
$$\max_j D_{1j} \le \frac{1}{tn}\sum_{i=1}^n \E[|\mathcal{T}_\tau(\varepsilon_i)|^2M^2(Z_i)] \le  \frac{\E[\tau^{1-\delta}|\varepsilon_1|^{1+\delta}M^2(Z_1)]}{t}  \le \frac{\sigma^2 v_\delta \tau^{1-\delta}}{t}.$$
Note that the augmented sequence $\{(Z_i, \varepsilon_i)\}_{i=1}^n$ is a stationary Markov chain. By a similar argument to that in Lemmas \ref{lem1}-\ref{lem2} and the law of large number for Markov chains \citep[Theorem 17.1.2]{meyn2012markov}
\begin{align*}
\max_j D_{2j} &\le \frac{1}{tn}\sum_{i=1}^n \tau^{1-\delta} |\varepsilon_i|^{1+\delta}M^2(Z_i)\\
&= \frac{\tau^{1-\delta}(\E[|\varepsilon_1|^{1+\delta}M^2(Z_1)]+o_p(1))}{t} \le \frac{\tau^{1-\delta}(\sigma^2 v_\delta +o_p(1))}{t} .
\end{align*}
Noting that $|\mathcal{T}_t[W] - \E \mathcal{T}_t[W]| \le 2t$ almost surely, we apply the Bernstein's inequality for Markov chains in \citep*[Theorem 1.1]{jiang2018bernstein} and yields
$$\P\left(D_{3j} > \epsilon \right) \le 2\exp{\left(-\frac{n\epsilon^2}{\frac{1+\gamma}{1-\gamma} \cdot V_{n,t} + 10t\epsilon }\right)} \le 2\exp{\left(-\frac{n\epsilon^2}{\frac{1+\gamma}{1-\gamma} \cdot \sigma^2 v_\delta \tau^{1-\delta} + 10t\epsilon }\right)},$$
where
$$V_{n,t} = \frac{1}{n} \sum_{i=1}^n \text{Var}\{\mathcal{T}_t[\mathcal{T}_\tau(\varepsilon_i)x_{ij}]\} \le \frac{1}{n} \sum_{i=1}^n \E[|\mathcal{T}_\tau(\varepsilon_i)|^2x_{ij}^2] \le \sigma^2 v_\delta \tau^{1-\delta}.$$
A union bound delivers that
$$\P\left( \max_{j} D_{3j} > \epsilon \right) \le 2d\exp{\left(-\frac{n\epsilon^2}{\frac{1+\gamma}{1-\gamma} \cdot \sigma^2 v_\delta \tau^{1-\delta} + 10t\epsilon }\right)}.$$
Let $t = \frac{1+\gamma}{1-\gamma} \cdot \tau$ then
$$\max_{j} D_{3j} > \sqrt{\frac{1+\gamma}{1-\gamma}\cdot \frac{2\sigma^2 v_\delta \tau^{1-\delta} \log d}{n}} + \frac{1+\gamma} {1-\gamma}\cdot \frac{20 \tau \log d}{n}$$
with probability at most $2/d$. Collecting these pieces together completes the proof.
\end{proof}

\section{Discussion} \label{sec6}

Heavy-tailed errors and dependent observations are two stylized features of many real high-dimensional data. However, the current framework of high-dimensional regression assumes sub-Gaussian tails of errors and independent observations for the convenience of theoretical analyses. Our long-term goal is to generalize the current framework to cover real cases of both heavy-tailed errors and dependent observations. While \citep*{sun2019adaptive} proposed AHR for the robust estimation against heavy-tailed errors, this paper makes progresses in the vertical direction and deals with dependent observations.

The technical key of AHR is the Bernstein-type inequality to quantitatively characterize the tradeoff between bias and robustness in the Huber regression so that the optimal robustification parameter is adaptively chosen. For the independent setup, Bernstein-type inequalities, especially those for unbounded random variables, have been well established. However, for the Markov-dependent setup, the existing Bernstein-type inequalities in the literature work for bounded functions of Markov chains only \citep*{paulin2015concentration,jiang2018bernstein}. We develop a truncation argument to extend the Bernstein-type inequality in \citep*{jiang2018bernstein} to unbounded functions of Markov chains. This truncation argument is potentially useful for other high-dimensional statistical problems in the Markov-dependent setup, e.g., the high-dimensional covariance matrix estimation on Markov-dependent samples. 

In the Markov-dependent setup, we find that the $\tau$-adaption and the $\bm{\beta}$-estimation of AHR exhibits a similar phase transition to that in the independent setup. The only difference is the sample size $n$ is discounted by a factor $(1-\gamma)/(1+\gamma)$, where $\gamma$ is the norm of Markov operator acting on the Hilbert space $L_2(\pi)$ and measures the Markov dependence. If $\gamma = 0$ then the conclusion recovers the main result of \citet*{sun2019adaptive} for the independent data samples. If $\gamma \to 1$ as $n \to \infty$ then the conclusion in Theorem \ref{thm} is still valid so long as
$$s\sqrt{\frac{1+\gamma}{1-\gamma} \cdot \frac{\log d}{n}} \to 0.$$
It is also valid if $\gamma$ is replaced with an overestimate $\gamma' \ge \gamma$. Inspired by the discussion on the optimal variance proxies of concentration inequalities for functions of Markov chains in \citep*{jiang2018bernstein}, we conjecture that the discounting factor $(1-\gamma)/(1+\gamma)$ is optimally sharp when translating the theories of high-dimensional linear regression from the independent setup to the Markov-dependent setup.

The theoretical and practical applicabilities of AHR on time series data with more complicated dependence structure are the potential directions of future works. Of particular interest is how the dependence of the time series data get involved in the $\tau$-adaption and the $\bm{\beta}$-estimation of AHR. However, the theoretical analysis of AHR in more complicated setups still hinges on the availability of more general Bernstein-type inequalities. Unfortunately, there are few concentration inequalities fulfilling the goal to theoretically justify AHR and other high-dimensional regression methods on data with a general dependence structure.

%\section*{Acknowledgements}
%And this is an acknowledgements section with a heading that was produced by the
%$\backslash$section* command. Thank you all for helping me writing this
%\LaTeX\ sample file. See \ref{suppA} for the supplementary material example.

%\begin{supplement}
%\sname{Supplement A}\label{suppA}
%\stitle{Title of the Supplement A}
%\slink[url]{http://www.e-publications.org/ims/support/dowload/imsart-ims.zip}
%\sdescription{...}
%\end{supplement}

%\bibliographystyle{apalike}
\bibliographystyle{ims}
\bibliography{ref}

\begin{thebibliography}{48}
\expandafter\ifx\csname natexlab\endcsname\relax\def\natexlab#1{#1}\fi
\expandafter\ifx\csname url\endcsname\relax
  \def\url#1{\texttt{#1}}\fi
\expandafter\ifx\csname urlprefix\endcsname\relax\def\urlprefix{URL }\fi

\bibitem[{Bai and Wu(1997)}]{bai1997general}
\textsc{Bai, Z.} and \textsc{Wu, Y.} (1997).
\newblock General m-estimation.
\newblock \textit{Journal of Multivariate Analysis} \textbf{63} 119 -- 135.

\bibitem[{Belloni et~al.(2012)Belloni, Chen, Chernozhukov and
  Hansen}]{belloni2012sparse}
\textsc{Belloni, A.}, \textsc{Chen, D.}, \textsc{Chernozhukov, V.} and
  \textsc{Hansen, C.} (2012).
\newblock Sparse models and methods for optimal instruments with an application
  to eminent domain.
\newblock \textit{Econometrica} \textbf{80} 2369--2429.

\bibitem[{Bickel et~al.(2009)Bickel, Ritov and
  Tsybakov}]{bickel2009simultaneous}
\textsc{Bickel, P.~J.}, \textsc{Ritov, Y.} and \textsc{Tsybakov, A.~B.} (2009).
\newblock Simultaneous analysis of lasso and dantzig selector.
\newblock \textit{The Annals of Statistics} \textbf{37} 1705--1732.

\bibitem[{B{\"u}hlmann and van~de Geer(2011)}]{buhlmann2011statistics}
\textsc{B{\"u}hlmann, P.} and \textsc{van~de Geer, S.} (2011).
\newblock \textit{Statistics for high-dimensional data: methods, theory and
  applications}.
\newblock Springer Science \& Business Media.

\bibitem[{Cand\`{e}s and Tao(2007)}]{candes2007dantzig}
\textsc{Cand\`{e}s, E.} and \textsc{Tao, T.} (2007).
\newblock The dantzig selector: Statistical estimation when p is much larger
  than n.
\newblock \textit{The Annals of Statistics} \textbf{35} 2313--2351.

\bibitem[{Chung et~al.(2012)Chung, Lam, Liu and
  Mitzenmacher}]{chung2012chernoff}
\textsc{Chung, K.-M.}, \textsc{Lam, H.}, \textsc{Liu, Z.} and
  \textsc{Mitzenmacher, M.} (2012).
\newblock Chernoff-hoeffding bounds for markov chains: Generalized and
  simplified.
\newblock In \textit{Proceedings of the 29th International Symposium on
  Theoretical Aspects of Computer Science}.

\bibitem[{Cont(2001)}]{cont2001empirical}
\textsc{Cont, R.} (2001).
\newblock Empirical properties of asset returns: stylized facts and statistical
  issues.
\newblock \textit{Quantitative Finance} \textbf{1} 223--236.

\bibitem[{Eklund et~al.(2016)Eklund, Nichols and Knutsson}]{eklund2016cluster}
\textsc{Eklund, A.}, \textsc{Nichols, T.~E.} and \textsc{Knutsson, H.} (2016).
\newblock Cluster failure: why fmri inferences for spatial extent have inflated
  false-positive rates.
\newblock \textit{Proceedings of the National Academy of Sciences} \textbf{113}
  7900--7905.

\bibitem[{Fan et~al.(2014)Fan, Han and Liu}]{fan2014challenges}
\textsc{Fan, J.}, \textsc{Han, F.} and \textsc{Liu, H.} (2014).
\newblock Challenges of big data analysis.
\newblock \textit{National Science Review} \textbf{1} 293--314.

\bibitem[{Fan et~al.(2018{\natexlab{a}})Fan, Jiang and Sun}]{fan2018hoeffding}
\textsc{Fan, J.}, \textsc{Jiang, B.} and \textsc{Sun, Q.} (2018{\natexlab{a}}).
\newblock Hoeffding's lemma for markov chains and its applications to
  statistical learning.
\newblock \textit{arXiv preprint arXiv:1802.00211} .

\bibitem[{Fan et~al.(2017)Fan, Li and Wang}]{fan2017estimation}
\textsc{Fan, J.}, \textsc{Li, Q.} and \textsc{Wang, Y.} (2017).
\newblock Estimation of high dimensional mean regression in the absence of
  symmetry and light tail assumptions.
\newblock \textit{Journal of the Royal Statistical Society: Series B
  (Statistical Methodology)} \textbf{79} 247--265.

\bibitem[{Fan and Li(2001)}]{fan2001variable}
\textsc{Fan, J.} and \textsc{Li, R.} (2001).
\newblock Variable selection via nonconcave penalized likelihood and its oracle
  properties.
\newblock \textit{Journal of the American Statistical Association} \textbf{96}
  1348--1360.

\bibitem[{Fan et~al.(2018{\natexlab{b}})Fan, Liu, Sun and Zhang}]{fan2018lamm}
\textsc{Fan, J.}, \textsc{Liu, H.}, \textsc{Sun, Q.} and \textsc{Zhang, T.}
  (2018{\natexlab{b}}).
\newblock I-lamm for sparse learning: Simultaneous control of algorithmic
  complexity and statistical error.
\newblock \textit{The Annals of Statistics} \textbf{46} 814.

\bibitem[{Fan et~al.(2011)Fan, Lv and Qi}]{fan2011sparse}
\textsc{Fan, J.}, \textsc{Lv, J.} and \textsc{Qi, L.} (2011).
\newblock Sparse high-dimensional models in economics.
\newblock \textit{Annual Review of Economics} \textbf{3} 291--317.

\bibitem[{Fan et~al.(2016)Fan, Wang and Zhu}]{fan2016shrinkage}
\textsc{Fan, J.}, \textsc{Wang, W.} and \textsc{Zhu, Z.} (2016).
\newblock A shrinkage principle for heavy-tailed data: High-dimensional robust
  low-rank matrix recovery.
\newblock \textit{arXiv preprint arXiv:1603.08315} .

\bibitem[{Friston et~al.(1995)Friston, Holmes, Poline, Grasby, Williams,
  Frackowiak and Turner}]{friston1995analysis}
\textsc{Friston, K.}, \textsc{Holmes, A.}, \textsc{Poline, J.-B.},
  \textsc{Grasby, P.}, \textsc{Williams, S.}, \textsc{Frackowiak, R.} and
  \textsc{Turner, R.} (1995).
\newblock Analysis of fmri time-series revisited.
\newblock \textit{NeuroImage} \textbf{2} 45 -- 53.

\bibitem[{Gupta et~al.(2014)Gupta, Ellis, Ashar, Moes, Bader, Zhan, West and
  Arking}]{gupta2014transcriptome}
\textsc{Gupta, S.}, \textsc{Ellis, S.~E.}, \textsc{Ashar, F.~N.}, \textsc{Moes,
  A.}, \textsc{Bader, J.~S.}, \textsc{Zhan, J.}, \textsc{West, A.~B.} and
  \textsc{Arking, D.~E.} (2014).
\newblock Transcriptome analysis reveals dysregulation of innate immune
  response genes and neuronal activity-dependent genes in autism.
\newblock \textit{Nature communications} \textbf{5} 5748.

\bibitem[{He and Shao(1996)}]{he1996general}
\textsc{He, X.} and \textsc{Shao, Q.-M.} (1996).
\newblock A general bahadur representation of m-estimators and its application
  to linear regression with non-stochastic designs.
\newblock \textit{The Annals of Statistics} \textbf{24} 2608--2630.

\bibitem[{He and Shao(2000)}]{he2000parameters}
\textsc{He, X.} and \textsc{Shao, Q.-M.} (2000).
\newblock On parameters of increasing dimensions.
\newblock \textit{Journal of Multivariate Analysis} \textbf{73} 120 -- 135.

\bibitem[{Hsu et~al.(2015)Hsu, Kontorovich and Szepesv{\'a}ri}]{hsu2015mixing}
\textsc{Hsu, D.~J.}, \textsc{Kontorovich, A.} and \textsc{Szepesv{\'a}ri, C.}
  (2015).
\newblock Mixing time estimation in reversible markov chains from a single
  sample path.
\newblock In \textit{Advances in neural information processing systems}.

\bibitem[{Huber(1964)}]{Huber1964robust}
\textsc{Huber, P.~J.} (1964).
\newblock Robust estimation of a location parameter.
\newblock \textit{Ann. Math. Statist.} \textbf{35} 73--101.

\bibitem[{Huber(1973)}]{Huber1973robust}
\textsc{Huber, P.~J.} (1973).
\newblock Robust regression: Asymptotics, conjectures and monte carlo.
\newblock \textit{The Annals of Statistics} \textbf{1} 799--821.

\bibitem[{Jiang et~al.(2018)Jiang, Sun and Fan}]{jiang2018bernstein}
\textsc{Jiang, B.}, \textsc{Sun, Q.} and \textsc{Fan, J.} (2018).
\newblock Bernstein's inequality for general markov chains.
\newblock \textit{arXiv preprint arXiv:1805.10721} .

\bibitem[{Kontoyiannis and Meyn(2012)}]{kontoyiannis2012geometric}
\textsc{Kontoyiannis, I.} and \textsc{Meyn, S.~P.} (2012).
\newblock Geometric ergodicity and the spectral gap of non-reversible markov
  chains.
\newblock \textit{Probability Theory and Related Fields} \textbf{154} 327--339.

\bibitem[{Le{\'o}n and Perron(2004)}]{leon2004optimal}
\textsc{Le{\'o}n, C.~A.} and \textsc{Perron, F.} (2004).
\newblock Optimal hoeffding bounds for discrete reversible markov chains.
\newblock \textit{Annals of Applied Probability} \textbf{14} 958--970.

\bibitem[{Lezaud(1998)}]{lezaud1998chernoff}
\textsc{Lezaud, P.} (1998).
\newblock Chernoff-type bound for finite markov chains.
\newblock \textit{The Annals of Applied Probability} \textbf{8} 849--867.

\bibitem[{Loh(2017)}]{loh2017statistical}
\textsc{Loh, P.-L.} (2017).
\newblock Statistical consistency and asymptotic normality for high-dimensional
  robust $m$-estimators.
\newblock \textit{The Annals of Statistics} \textbf{45} 866--896.

\bibitem[{Ludvigson and Ng(2009)}]{ludvigson2009macro}
\textsc{Ludvigson, S.~C.} and \textsc{Ng, S.} (2009).
\newblock Macro factors in bond risk premia.
\newblock \textit{The Review of Financial Studies} \textbf{22} 5027--5067.

\bibitem[{Mammen(1989)}]{mammen1989asymptotics}
\textsc{Mammen, E.} (1989).
\newblock Asymptotics with increasing dimension for robust regression with
  applications to the bootstrap.
\newblock \textit{The Annals of Statistics} \textbf{17} 382--400.

\bibitem[{McCracken and Ng(2016)}]{mccracken2016fred}
\textsc{McCracken, M.~W.} and \textsc{Ng, S.} (2016).
\newblock {FRED-MD}: a monthly database for macroeconomic research.
\newblock \textit{Journal of Business \& Economic Statistics} \textbf{34}
  574--589.

\bibitem[{Meyn and Tweedie(2012)}]{meyn2012markov}
\textsc{Meyn, S.~P.} and \textsc{Tweedie, R.~L.} (2012).
\newblock \textit{Markov chains and stochastic stability}.
\newblock Springer Science \& Business Media.

\bibitem[{Miasojedow(2014)}]{miasojedow2014hoeffding}
\textsc{Miasojedow, B.} (2014).
\newblock Hoeffding's inequalities for geometrically ergodic markov chains on
  general state space.
\newblock \textit{Statistics \& Probability Letters} \textbf{87} 115--120.

\bibitem[{Paulin(2015)}]{paulin2015concentration}
\textsc{Paulin, D.} (2015).
\newblock Concentration inequalities for markov chains by marton couplings and
  spectral methods.
\newblock \textit{Electronic Journal of Probability} \textbf{20} 1--32.

\bibitem[{Portnoy(1985)}]{portnoy1985asymptotic}
\textsc{Portnoy, S.} (1985).
\newblock Asymptotic behavior of $m$ estimators of $p$ regression parameters
  when $p^2/n$ is large; ii. normal approximation.
\newblock \textit{The Annals of Statistics} \textbf{13} 1403--1417.

\bibitem[{Roberts and Rosenthal(1997)}]{roberts1997geometric}
\textsc{Roberts, G.} and \textsc{Rosenthal, J.} (1997).
\newblock Geometric ergodicity and hybrid markov chains.
\newblock \textit{Electronic Communications in Probability} \textbf{2} 13--25.

\bibitem[{Roberts and Tweedie(2001)}]{roberts2001geometric}
\textsc{Roberts, G.~O.} and \textsc{Tweedie, R.~L.} (2001).
\newblock Geometric $l_2$ and $l_1$ convergence are equivalent for reversible
  markov chains.
\newblock \textit{Journal of Applied Probability} \textbf{38} 37--41.

\bibitem[{Rudolf(2012)}]{rudolf2012explicit}
\textsc{Rudolf, D.} (2012).
\newblock \textit{Explicit error bounds for Markov chain Monte Carlo}.
\newblock Ph.D. thesis, Institute of Mathematics, University Jena, Jena,
  Germany.

\bibitem[{Ryali et~al.(2012)Ryali, Chen, Supekar and
  Menon}]{ryali2012estimation}
\textsc{Ryali, S.}, \textsc{Chen, T.}, \textsc{Supekar, K.} and \textsc{Menon,
  V.} (2012).
\newblock Estimation of functional connectivity in fmri data using stability
  selection-based sparse partial correlation with elastic net penalty.
\newblock \textit{NeuroImage} \textbf{59} 3852--3861.

\bibitem[{Smith(2012)}]{smith2012future}
\textsc{Smith, S.~M.} (2012).
\newblock The future of fmri connectivity.
\newblock \textit{Neuroimage} \textbf{62} 1257--1266.

\bibitem[{Stock and Watson(2002)}]{stock2002macroeconomic}
\textsc{Stock, J.~H.} and \textsc{Watson, M.~W.} (2002).
\newblock Macroeconomic forecasting using diffusion indexes.
\newblock \textit{Journal of Business \& Economic Statistics} \textbf{20}
  147--162.

\bibitem[{Su and Cand\`{e}s(2016)}]{su2016slope}
\textsc{Su, W.} and \textsc{Cand\`{e}s, E.} (2016).
\newblock Slope is adaptive to unknown sparsity and asymptotically minimax.
\newblock \textit{Ann. Statist.} \textbf{44} 1038--1068.

\bibitem[{Sun et~al.(2019)Sun, Zhou and Fan}]{sun2019adaptive}
\textsc{Sun, Q.}, \textsc{Zhou, W.-X.} and \textsc{Fan, J.} (2019).
\newblock Adaptive huber regression.
\newblock \textit{Journal of the American Statistical Association (dot:
  10.1080/01621459.2018.1543124)} .

\bibitem[{Tang et~al.(2012)Tang, Bressler, Sylvester, Shulman and
  Corbetta}]{tang2012measuring}
\textsc{Tang, W.}, \textsc{Bressler, S.~L.}, \textsc{Sylvester, C.~M.},
  \textsc{Shulman, G.~L.} and \textsc{Corbetta, M.} (2012).
\newblock Measuring granger causality between cortical regions from voxelwise
  fmri bold signals with lasso.
\newblock \textit{PLoS computational biology} \textbf{8} e1002513.

\bibitem[{Tibshirani(1996)}]{tibshirani1996regression}
\textsc{Tibshirani, R.} (1996).
\newblock Regression shrinkage and selection via the lasso.
\newblock \textit{Journal of the Royal Statistical Society. Series B
  (Methodological)} \textbf{58} 267--288.

\bibitem[{Tibshirani et~al.(2015)Tibshirani, Wainwright and
  Hastie}]{tibshirani2015statistical}
\textsc{Tibshirani, R.}, \textsc{Wainwright, M.} and \textsc{Hastie, T.}
  (2015).
\newblock \textit{Statistical learning with sparsity: the lasso and
  generalizations}.
\newblock Chapman and Hall/CRC.

\bibitem[{Wang et~al.(2015)Wang, Peng and Li}]{wang2015high}
\textsc{Wang, L.}, \textsc{Peng, B.} and \textsc{Li, R.} (2015).
\newblock A high-dimensional nonparametric multivariate test for mean vector.
\newblock \textit{Journal of the American Statistical Association} \textbf{110}
  1658--1669.

\bibitem[{Worsley and Friston(1995)}]{worsley1995analysis}
\textsc{Worsley, K.} and \textsc{Friston, K.} (1995).
\newblock Analysis of fmri time-series revisited—again.
\newblock \textit{NeuroImage} \textbf{2} 173 -- 181.

\bibitem[{Yohai and Maronna(1979)}]{yohai1979asymptotic}
\textsc{Yohai, V.~J.} and \textsc{Maronna, R.~A.} (1979).
\newblock Asymptotic behavior of $m$-estimators for the linear model.
\newblock \textit{The Annals of Statistics} \textbf{7} 258--268.

\end{thebibliography}

%\appendix
%\input{tex/appendix}

\end{document}